\newtheorem{theorem}{Theorem}[section]
\newtheorem{lemma}[theorem]{Lemma}
\newtheorem{remark}{Remark}[section]
\newtheorem{proposition}{Proposition}[section]
\newtheorem{definition}{Definition}[section]
\numberwithin{equation}{section}
\begin{document}
\title[the NLS equation with a potential in 2D]{ On the scattering problem for the nonlinear Schr\"{o}dinger equation with a potential in 2D}

\author[V. Georgiev]{Vladimir Georgiev}
\address{Department of Mathematics, University of Pisa, Largo Bruno Pontecorvo 5, Pisa, 56127, Italy}
\email{georgiev@dm.unipi.it}

\author[C. Li]{Chunhua Li}
\address{Department of Mathematics, College of Science, Yanbian University, No. 977
Gongyuan Road, Yanji City, Jilin Province,133002, China}
\email{sxlch@ybu.edu.cn}

\keywords{Scattering problem; Nonlinear Schr\"{o}dinger equation; Time decay estimates;
Strichartz estimates; Resolvent estimates \\
\quad 2000 Mathematics Subject Classification:\ 35Q55, 35B40}
\maketitle

\begin{abstract}
We consider the scattering problem for the nonlinear Schr\"{o}dinger
equation with a potential in two space dimensions.
Appropriate resolvent estimates
are proved  and applied  to estimate the operator $A(s)$ appearing in commutator relations.
The equivalence between the operators
$\left(-\Delta_{V}\right)^{\frac{s}{2}}$ and $\left(-\Delta \right)^{\frac{s}{2}}$
 in $L^{2}$ norm sense
for $0\leq s <1$ is investigated
by using free resolvent estimates and Gaussian estimates for the heat kernel of the Schr\"{o}dinger operator $-\Delta_{V}$.
Our main result guarantees the global existence of solutions and time
decay of the solutions assuming initial data have small weighted Sobolev norms. Moreover, the global solutions obtained in the main result scatter.
\end{abstract}

\section{Introduction and main results}

We consider the following nonlinear Schr\"{o}dinger equation
\begin{eqnarray}
\left\{
\begin{array}{l}
i \partial_t u + \frac{1}{2}(\Delta-V) u = \lambda |u|^{p-1}u, \\
u(1, x) = u_{0} (x)
\end{array}
\right. \label{NLS}
\end{eqnarray}
in $(t,x) \in \mathbb{R}\times \mathbb{R}^{2}$, where $\Delta$ is the 2-dimensional Laplacian,
 $u=u(t,x)$ is a complex-valued unknown function,
 $t\geq 1$, $p>2, \lambda \in \mathbb{C}\backslash \{0\} $, $\text{Im}\lambda \leq 0$,
$V(x)$ is a  real valued measurable function defined in $\mathbb{R}^{2}$.

In this paper we assume the  time-independent potential $V(x)$  satisfying the following three hypotheses.

\rm {(H1) } The real valued potential $V(x)$ is of the $C^1$ class on $\mathbb{R}^{2}$ and satisfies the  decay estimate $\left|V(x)\right|+\left|x\cdot \nabla V(x)\right|\leq \frac{c}{<x>^\beta}$, where $c>0$ and $\beta>3$;

\rm {(H2) } The potential  $V(x)$ is non-negative;

\rm {(H3) } Zero is a regular point.

We notice that the operator $\Delta_{V}= \Delta -V(x)$ is self-adjoint one by the assumption \rm {(H1)}.
The  assumption \rm {(H2)} and the spectral theorem guarantee that the spectrum of $-\Delta_{V} \subset [0,\infty).$
The short range decay assumption \rm {(H1)} implies that $-\Delta_{V}$ has no positive  eigenvalues due to Agmon's result in \cite{Agmon2}. Combining this fact, the assumption \rm {(H3)}  and Theorem 6.1 in \cite{Agmon}, we see that the spectrum of $-\Delta_{V} \subset [0,\infty)$ is absolutely continuous ( as it was deduced also in \cite{Sch}).

The assumption \rm {(H3)} is not always necessary. We can see in Appendix II that stronger decay of the potential $V(x)$ with $\beta > 10$ in \rm {(H1)} can guarantee that zero is a regular point provided $V \geq 0$ by Theorem 6.2 in \cite{JN01}. Another situation, \rm {(H3) }  and appropriate resolvent estimates are obtained by Theorem 8.2 and Remark 9.2 in \cite{Mo18} under the additional assumption $\partial_r (r V(x)) \leq 0.$

The importance of self-adjointness of quantum Hamiltonians has been
shown, since the work of Von Neumann about 1930  (see \cite{simon17}).
After the Gross-Pitaevskii equation was presented in  1960s,
many crucial problems in quantum mechanics can be reduced to
the study of (\ref{NLS}).
 However, there
is few  research about the asymptotic behavior of solutions for the nonlinear Schr\"{o}dinger
equation (\ref{NLS}) (see \cite{CGV2014}, \cite{GV2012},
\cite{Li2017},  \cite{Mi07}).

In the case of  $V(x)\equiv0$, it is well known that $p=1+\frac{2}{n}$ can be regarded as a boarderline of the short range and long range interactions  to the equation (\ref{NLS}) (see \cite{MS91}, \cite{Str81}, \cite{str74} and \cite{Ba1984}).
The existence of modified wave operators of
the cubic nonlinear Schr\"{o}dinger equation (\ref{NLS}) with $V(x)\equiv0$ and $\lambda \in \mathbb{R}\setminus\{0\}$ in $\mathbb{R}$ was first studied by Ozawa in \cite{Ozawa}.
In the case of the space dimension $n=1, 2, 3$, Hayashi and Naumkin showed the completeness of scattering operators and the decay estimate of the critical nonlinear Schr\"{o}dinger equations (\ref{NLS}) with $V(x)\equiv0$ and $\lambda \in \mathbb{R}\setminus\{0\}$ in \cite{HN98}.
The initial value problem for the critical nonlinear Schr\"{o}dinger equation (\ref{NLS}) with $V(x)\equiv0$
and $\lambda \in \mathbb{R}\setminus\{0\}$ in space dimensions $n \geq 4$ was considered by
Hayashi, Li and Naumkin in \cite{HLN18}. They obtained the two side sharp time decay estimates of solutions in the uniform norm.
There have been some research about
decay estimates of solutions to the subcritical nonlinear Schr\"{o}dinger equation (\ref{NLS}) with $V(x)\equiv0$ and $\text{Im}{\lambda}<0$
for arbitrarily large initial data
(see e.g. \cite{JJL} and \cite{KS}).
Segawa, Sunagawa and Yasuda considered a sharp lower bound for the lifespan of small solutions to the subcritical Schr\"{o}dinger equation  (\ref{NLS}) with $V(x)\equiv0$ and $\text{Im}{\lambda}>0$
in the space dimension $n=1,2,3$ in \cite{SaSuYa}.
For the systems of nonlinear Schr\"{o}dinger equations,
the existence of modified wave operators to a quadratic system in $\mathbb{R}^2$ was studied in \cite{HLN16}, and initial value problem for a cubic derivative system in $\mathbb{R}$ was investigated in \cite{LS16}.

When $V(x)\not\equiv 0$, the existence of wave operators for three dimensional Schr\"{o}dinger operators
with singular potentials was proved by Georgiev and Ivanov in \cite{GI2005}.
Georgiev and Velichkov studied decay estimates for the  nonlinear
Schr\"{o}dinger equation (\ref{NLS})  with  $p>\frac{5}{3}$  in  $\mathbb{R}^{3}$ in \cite{GV2012}.
In \cite{CGV2014},  Cuccagna, Georgiev and Visciglia  considered decay and scattering of small solutions to the nonlinear Schr\"{o}dinger equation  (\ref{NLS})  with $p>3$  in $\mathbb{R}$.
Li and Zhao proved decay and scattering of solutions for the nonlinear
Schr\"{o}dinger equation  (\ref{NLS})  with $1+\frac{2}{n}<p\leq 1+\frac{4}{n-2}$ in \cite{Li2017}, when the
space dimension $n\geq 3$.
$L^p$-boundedness of wave operators for two dimensional Schr\"{o}dinger operators
was first studied by Yajima in \cite{Ya99}.
In \cite{Mi07} Mizumachi studied the asymptotic stability of a small solitary wave to the nonlinear
Schr\"{o}dinger equation (\ref{NLS}) with $p\geq 3$ in $\mathbb{R}^{2}$.
As far as we know, the time decay and scattering problem for the supercritical nonlinear
Schr\"{o}dinger equations (\ref{NLS}) with $p>2$ in  $\mathbb{R}^{2}$
has not been shown.
In this paper, our aim is to study  the time decay and scattering problem
for  (\ref{NLS}) with $V(x)$  under the assumptions $\rm {(H1) }-\rm {(H3) }$
for $p>2$.

We now introduce some notations. $L^{p}(\mathbb{R}^{n})$ denotes usual
Lebesgue space on $\mathbb{R}^{n}$ for $1\leq p \leq \infty$.
For $m,s\in \Bbb{R}$, weighted Sobolev space $
H^{m,s}\left( \Bbb{R}^{n}\right) $ is defined by
\begin{equation}
H^{m,s}\left( \Bbb{R}^{n}\right) =\left\{ f\in \mathcal{S}^{\prime} \left(
\Bbb{R}^{n}\right) ;\left\| f\right\| _{H^{m,s}\left( \Bbb{R}^{n}
\right) }=\left\|(1+|x|^{2})^{\frac{s}{2}}
(I-\Delta)^{\frac{m}{2}} f \right\| _{L^{2}
\left( \Bbb{R}^{n}\right) }<\infty \right\} .
\notag
\end{equation}
We write $H^{m,0}\left( \mathbb{R}^{n}\right) =H^{m}\left( \mathbb{R}^{n}\right)$ for simplicity.
For $s \geq 0,$  the homogeneous Sobolev spaces are denoted by
\begin{equation}
\dot{H}^{s,0}\left( \Bbb{R}^{n}\right) =\left\{ f\in \mathcal{S}^{\prime} \left(
\Bbb{R}^{n}\right) ;\left\| f\right\| _{\dot{H}^{s,0}\left( \Bbb{R}^{n}
\right) }=\left\|(-\Delta)^{\frac{s}{2}} f \right\| _{L^{2}
\left( \Bbb{R}^{n}\right) }<\infty \right\}
\notag
\end{equation}
and
\begin{equation}
\dot{H}^{0,s}\left( \Bbb{R}^{n}\right) =\left\{ f\in \mathcal{S}^{\prime} \left(
\Bbb{R}^{n}\right) ;\left\| f\right\| _{\dot{H}^{0,s}\left( \Bbb{R}^{n}
\right) }=\left\||x|^{s} f \right\| _{L^{2}
\left( \Bbb{R}^{n}\right) }<\infty \right\}.
\notag
\end{equation}
For $1 \leq p \leq \infty$ and $s>0$,
we denote the space $L^{p,s}$ with the norm
\begin{eqnarray}
\|f\|_{L^{p,s}}=\|<x>^{s}f\|_{L^{p}\left(\mathbb{R}^{2}\right)}.\notag
\end{eqnarray}
We define the dilation operator by
\begin{equation}
\left(D_{t}\phi \right)(x)=\frac{1}{(it)^{\frac{n}{2}}}\phi \left(\frac{x}{t}\right) \notag
\end{equation}
for $t \neq 0$ and define $M(t)=e^{-\frac{i}{2t}
|x|^{2}}$ for $t \neq 0$. Evolution operator $U(t)$ is written as
\begin{equation}
U(t)=M(-t)D_{t}\mathcal{F}M(-t), \notag
\end{equation}
where $\mathcal{F}$ and $\mathcal{F}^{-1}$ denote the Fourier transform
and its inverse respectively.
The standard generator of Galilei transformations is given as
\begin{equation}
J(t)=U(t)xU(-t)=x+it\nabla, \notag
\end{equation}
which is also represented as
\begin{equation}
J(t)=M(-t)it \nabla M(t) \notag
\end{equation}
for $t \neq 0$.
Fractional power of $J(t)$ is defined as
\begin{equation}
|J|^{a}(t)=U(t)|x|^{a}U(-t), a>0, \notag
\end{equation}
which is also represented as (see \cite{HO1988})
\begin{equation}
|J|^{a}(t)=M(-t)(-t^{2}\Delta)^{\frac{a}{2}}M(t) \notag
\end{equation}
for $t \neq 0$. Moreover we have commutation relations
with $|J|^{a}$ and $L=i\partial_{t}+\frac{1}{2}\Delta$ such
that $[L,|J|^{a}]=0$.
In what follows, we denote several positive constants by the
same letter $C$, which may vary from one line to another.
If there exists some constant $C>0$ such that $A \leq CF$,
we denote this fact by $``A \lesssim F"$. Similarly,  $``A \sim F"$
means  $``A \lesssim F"$ and $``F  \lesssim A"$. Let $A$ be a
linear operator from Banach space $X$ to Banach space $Y$.
We denote the operator norm of $A$ by $\|A\|_{X\rightarrow Y}$.

Our main theorem is stated as follows:
\begin{theorem} \label{main theorem}
Assume  that  $V(x)$ satisfies $\rm {(H1) }-\rm {(H3) }$. Let $p>2$. Then there exist
constants $\epsilon_{0}>0$ and $C_{0}>0$ such that  for  any $\epsilon \in (0,\epsilon_{0})$
and  $\|u_{0}\|_{H^{\alpha}(\mathbb{R}^2)\cap \dot{H}^{0,\alpha}(\mathbb{R}^2)} \leq \epsilon,$ where
$1<\alpha<2$,
the  solution $u$ to  (\ref{NLS}) satisfies the time decay estimates
\begin{eqnarray}
\|u\|_{L^{\infty}\left(\mathbb{R}^{2}\right)}\leq C_{0}t^{-1} \epsilon \label{timedecay}
\end{eqnarray}
for $t\geq1$.
Moreover there exists $u_{+}\in L^{2}(\mathbb{R}^{2})$
such that
\begin{eqnarray}
\lim_{t\rightarrow \infty}\|u(t)-e^{i\frac{t}{2}\Delta_V}u_{+}\|_{L^{2}\left(\mathbb{R}^{2}\right)}=0. \label{scattering}
\end{eqnarray}
\end{theorem}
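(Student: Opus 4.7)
The plan is to run a small-data bootstrap in the Banach space
\begin{equation*}
X_T=\Bigl\{u:\sup_{1\le t\le T}\bigl(\|u(t)\|_{L^2}+\||J|^\alpha(t)u(t)\|_{L^2}+t\|u(t)\|_{L^\infty}\bigr)\le 2C\epsilon\Bigr\},
\end{equation*}
starting from the Duhamel representation
\begin{equation*}
u(t)=e^{\frac{i(t-1)}{2}\Delta_V}u_0-i\lambda\int_1^t e^{\frac{i(t-s)}{2}\Delta_V}\bigl(|u|^{p-1}u\bigr)(s)\,ds.
\end{equation*}
The $L^2$ bound is immediate from unitarity of $e^{it\Delta_V/2}$ together with the sign condition $\mathrm{Im}\,\lambda\le 0$, which precludes growth of $\|u(t)\|_{L^2}$. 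The pointwise $t^{-1}$ decay comes from the factorization $U(t)=M(-t)D_t\mathcal{F}M(-t)$: writing $\phi(t)=U(-t)u(t)$ and using $U(-t)|J|^\alpha u=|x|^\alpha\phi$, one obtains in two space dimensions
\begin{equation*}
\|u(t)\|_{L^\infty}\lesssim t^{-1}\|\mathcal{F}M(-t)\phi(t)\|_{L^\infty}\lesssim t^{-1}\bigl(\|u(t)\|_{L^2}+\||J|^\alpha u(t)\|_{L^2}\bigr),
\end{equation*}
via the Sobolev embedding $H^\alpha\hookrightarrow L^\infty$, valid since $\alpha>1$. Everything thus reduces to propagating a uniform bound on $\||J|^\alpha(t)u(t)\|_{L^2}$.

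To do so, I apply $|J|^\alpha(t)$ to the equation. Since $|J|^\alpha$ is built from the free evolution, $[L,|J|^\alpha]=0$ for $L=i\partial_t+\Delta/2$, and this yields
\begin{equation*}
L_V\bigl(|J|^\alpha u\bigr)=\tfrac12[|J|^\alpha,V]u+\lambda\,|J|^\alpha\bigl(|u|^{p-1}u\bigr),
\end{equation*}
so the operator $A(s):=[|J|^\alpha(s),V]$ of the abstract appears as an inhomogeneity. The nonlinear term is estimated via the identity $M(t)\bigl(|u|^{p-1}u\bigr)=|M(t)u|^{p-1}M(t)u$ and the fractional Leibniz rule, giving
\begin{equation*}
\||J|^\alpha\bigl(|u|^{p-1}u\bigr)\|_{L^2}\lesssim\|u\|_{L^\infty}^{p-1}\||J|^\alpha u\|_{L^2}\lesssim s^{-(p-1)}\epsilon^{p-1}\||J|^\alpha u\|_{L^2},
\end{equation*}
which is time-integrable on $[1,\infty)$ precisely because $p>2$.

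The main obstacle will be the commutator contribution $A(s)u(s)$. Because $1<\alpha<2$ lies above the range $0\le s<1$ in which the paper establishes $(-\Delta_V)^{s/2}\sim(-\Delta)^{s/2}$ in $L^2$, one cannot simply dispose of the commutator by passing to a $\Delta_V$-dressed fractional Galilean operator; moreover, the defining relation $|J|^\alpha=t^\alpha M(-t)(-\Delta)^{\alpha/2}M(t)$ carries a factor $s^\alpha$ that must be absorbed. I would represent $(-\Delta)^{\alpha/2}$ through a Balakrishnan-type functional-calculus integral $c_\alpha\int_0^\infty \mu^{\alpha/2-1}(-\Delta)(-\Delta+\mu)^{-1}\,d\mu$ of the free resolvent, thereby reducing $[|J|^\alpha,V]$ to an integral of $R_0(\mu)(\Delta V+2\nabla V\!\cdot\!\nabla)R_0(\mu)$ acting on $M(s)u$. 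The short-range hypothesis (H1) on $V$ and $x\cdot\nabla V$, together with the paper's resolvent estimates and the heat-kernel Gaussian bounds that underlie the $L^2$-equivalence of $(-\Delta_V)^{s/2}$ and $(-\Delta)^{s/2}$ for $s<1$, should produce non-stationary-phase gain from the oscillation of $M(s)u$ and yield a pointwise bound $\|A(s)u(s)\|_{L^2}\lesssim s^{-\gamma}$ with $\gamma>1$, so that $\int_1^\infty\|A(s)u(s)\|_{L^2}\,ds<\infty$ closes the $\||J|^\alpha u\|_{L^2}$ estimate.

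With all three components of the $X_T$-norm controlled uniformly in $T$, a standard contraction produces the global solution and the decay estimate (\ref{timedecay}). The scattering statement (\ref{scattering}) then follows from the identity
\begin{equation*}
e^{-\frac{it}{2}\Delta_V}u(t)-e^{-\frac{is}{2}\Delta_V}u(s)=-i\lambda\int_s^t e^{-\frac{i\tau}{2}\Delta_V}\bigl(|u|^{p-1}u\bigr)(\tau)\,d\tau,
\end{equation*}
unitarity of $e^{it\Delta_V/2}$ on $L^2$, and the bound $\||u|^{p-1}u\|_{L^2}\le\|u\|_{L^\infty}^{p-1}\|u\|_{L^2}\lesssim\tau^{-(p-1)}\epsilon^p$, which is integrable for $p>2$; hence $\{e^{-it\Delta_V/2}u(t)\}_{t\ge 1}$ is Cauchy in $L^2$ and converges to the scattering state $u_+$.
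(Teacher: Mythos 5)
Your overall skeleton (bootstrap on $\|u\|_{L^2}+\||J|^{\alpha}u\|_{L^2}$, the $L^\infty$ decay via the factorization of $U(t)$ and $H^\alpha\hookrightarrow L^\infty$, the fractional Leibniz bound $\||J|^\alpha(|u|^{p-1}u)\|_{L^2}\lesssim\|u\|_{L^\infty}^{p-1}\||J|^\alpha u\|_{L^2}$ with $t^{-(p-1)}$ integrable for $p>2$, and the Cauchy-in-$L^2$ argument for scattering) agrees with the paper's proof. The difference is that the paper works with the dressed operator $|J_V|^{\alpha}(t)=M(-t)(-t^2\Delta_V)^{\alpha/2}M(t)$, so that the potential enters only through the single commutator term $it^{\alpha-1}M(-t)A(\alpha)M(t)u$ with $A(\alpha)=\alpha(-\Delta_V)^{\alpha/2}+[x\cdot\nabla,(-\Delta_V)^{\alpha/2}]$, at the cost of the equivalence lemmas between $(-\Delta_V)^{s/2}$ and $(-\Delta)^{s/2}$; your choice of the free $|J|^{\alpha}$ instead produces $\tfrac12[|J|^\alpha,V]u=\tfrac12 t^{\alpha}M(-t)[(-\Delta)^{\alpha/2},V]M(t)u$, which carries the worse prefactor $t^{\alpha}$.

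The genuine gap is in your treatment of this commutator term. You propose to close the $\||J|^\alpha u\|_{L^2}$ estimate by establishing a pointwise bound $\|A(s)u(s)\|_{L^2}\lesssim s^{-\gamma}$ with $\gamma>1$ and integrating in $L^1_tL^2_x$, but no such bound is available, and a scaling count shows the $L^1_tL^2_x$ route cannot work. The best smoothing one can extract from the resolvent representation of $A(\alpha)$ is of the form $\|A(\alpha)f\|_{L^{q'}}\lesssim\|f\|_{L^{q}}$ (Lemma \ref{Lemma A(s)}); landing in $L^2$ forces $q=2$ and yields only $t^{\alpha-1}\|u\|_{L^2}\sim\epsilon\, t^{\alpha-1}$, which grows. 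Even granting the strongest conceivable gain, namely $\|A(\alpha)M(t)u\|_{L^2}\lesssim\|u\|_{L^{q}}$ with $q$ large together with $\|u\|_{L^q}\lesssim t^{-(1-2/q)}\epsilon$, the full term is $O(t^{\alpha-2+2/q})$, and $\alpha-2+2/q>-1$ for every $\alpha>1$; there is no room for an extra power of decay from "non-stationary phase", since $M(t)u$ is essentially unimodular times $u$. The paper's device is different and is the heart of the argument: it places the commutator forcing in the \emph{dual Strichartz} space $L^{p_1'}_tL^{q_1'}_x$ with $q_1'<2$ and $p_1'$ close to $2$ (Proposition \ref{theorem 2.2}), uses $\|A(\alpha)g\|_{L^{q_1'}}\lesssim\|g\|_{L^{q_1}}$ and the Sobolev bound $\|M(t)u\|_{L^{q_1}}\lesssim\|(-\Delta)^{\alpha/2}M(t)u\|_{L^2}^{\theta}\|u\|_{L^2}^{1-\theta}=t^{-\alpha\theta}\||J|^\alpha u\|_{L^2}^{\theta}\|u\|_{L^2}^{1-\theta}$, so that only $\|t^{\alpha-1-\alpha\theta}\|_{L^{p_1'}([1,T])}<\infty$ is required; this holds for $\theta\in(2/3,1/\alpha)$ and forces the restriction $1<\alpha<3/2$ actually used in the proof. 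Without replacing your $L^1_tL^2_x$ step by an argument of this type (or some other mechanism converting the spatial decay of $V$ into time integrability), the bootstrap does not close.
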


To prove Theorem \ref{main theorem}, we introduce the operators $|J_{V}|^{s}$ and $A(s)$ derived from some commutation relations. The properties of operators  $|J_{V}|^{s}$ and $A(s)$ are shown in Section \ref{Operators}. We present Strichartz estimates by Proposition \ref{proposition 2.2} and Proposition \ref{theorem 2.2} in Section \ref{Strichartz Estimates}. We have
\begin{eqnarray}
\label{c2}
\|A(s)u\|_{L^{q}(\mathbb{R}^{2})} \lesssim \|u\|_{L^{q^{\prime}}(\mathbb{R}^{2})}
\end {eqnarray}
for $1\leq q\leq 2$ and $ 1<s<2$ by using resolvent estimates (Lemma \ref{Lemma Resolvent Estimate 2 }) in Section \ref{The estimates of  $A(s)$}, where $\frac{1}{q}+\frac{1}{q^{\prime}}=1$.
Then we show
\begin{eqnarray}
\label{c1}
\left\| \left(-\Delta_{V}\right)^{\frac{s}{2}}f\right\|_{L^{2}(\mathbb{R}^{2})} \sim
 \left\| \left(-\Delta\right)^{\frac{s}{2}} f\right\|_{L^{2}(\mathbb{R}^{2})}
\end{eqnarray}
for all $0 \leq s< 1$ (see Lemma \ref{equi lemma})
and
\begin{eqnarray} \label{c3}
\left\|\left(-\Delta_{V}\right)^{\frac{s}{2}}f-\left(-\Delta \right)^{\frac{s}{2}}f\right\|_{L^{2}(\mathbb{R}^{2})}\lesssim
\|(-\Delta)^{\frac{s}{2}}f\|_{L^{2}(\mathbb{R}^{2})}^{\frac{\sigma }{s}}\|f\|_{L^{2}(\mathbb{R}^{2})}^{1-\frac{\sigma }{s}},
\end{eqnarray}
for all $1 \leq s< 2$ and $0<\sigma<1$ (see (\ref{n.m0}) in Lemma \ref{Lemma Jv})
in Section \ref{Equivalence}.
We prove our main theorem by using Strichartz estimates, (\ref{c2})
and (\ref{c3}) in Section \ref{Proof}. In Section \ref{Appendix}, we give the proofs
of properties of operators  $|J_{V}|^{s}$ and $A(s)$. We show that zero is not resonance
in Section \ref{resonance}.



\section{Operators $|J_{V}|^{s}$ and $A(s)$}
\label{Operators}
We will introduce the operators $|J_{V}|^{s}$ and $A(s)$
 to consider appropriate Sobolev norms and to
study the asymptotic behavior of solutions to  the equation (\ref{NLS}).

Setting $M(t)=e^{-\frac{i}{2t}|x|^{2}},$ we may  define $|J_{V}|^{s}(t):=M(-t)\left(-t^{2}\Delta_{V}\right)^{\frac{s}{2}}M(t)$.  We shall use the standard notation  $[B, D]=BD-DB$ for the commutator of two operators $B$ and $D$. The key commutator properties of the operator $|J_{V}|^{s}(t)$ are given in  the
following two propositions.
\begin{proposition}\label{proposition 1.1}
Let $A(s):=s\left(-\Delta_{V}\right)^{\frac{s}{2}}+\left[x\cdot\nabla, \left(-\Delta_{V}\right)^{\frac{s}{2}}\right].$
For $s>0,$ we have
\begin{equation}
\left[ i\partial_{t}+\frac{1}{2}\Delta_{V},|J_{V}|^{s}(t)\right]=it^{s-1} M(-t)A(s)M(t) \label{1.11}
\end{equation}
in two space dimensions.
\end{proposition}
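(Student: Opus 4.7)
The plan is to conjugate the entire identity by $M(t)$, which turns $|J_V|^s(t)$ into the much simpler operator $t^s(-\Delta_V)^{s/2}$. Indeed, from the definition of $|J_V|^s(t)$ and the identity $(-t^2\Delta_V)^{s/2}=t^s(-\Delta_V)^{s/2}$ (functional calculus on the nonnegative self-adjoint operator $-\Delta_V$), one has $M(t)\,|J_V|^s(t)\,M(-t)=t^s(-\Delta_V)^{s/2}$. Since conjugation preserves commutators and $M(t)M(-t)=1$, it suffices to prove the conjugated identity
\[
M(t)\bigl[i\partial_t+\tfrac12\Delta_V,\ |J_V|^s(t)\bigr]M(-t)=it^{s-1}A(s).
\]

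The first step I would carry out is the computation of the conjugated generator $\widetilde{L}_V:=M(t)(i\partial_t+\tfrac12\Delta_V)M(-t)$. From $\nabla M(-t)=(ix/t)M(-t)$ and, in two space dimensions, $\Delta M(-t)=\bigl(2i/t-|x|^2/t^2\bigr)M(-t)$, a routine product-rule calculation yields
\[
M(t)\Delta M(-t)=\Delta+\tfrac{2i}{t}(x\cdot\nabla+1)-\tfrac{|x|^2}{t^2};
\]
the potential is unchanged since $V$ is a multiplication operator. Similarly, $\partial_t M(-t)=-\tfrac{i|x|^2}{2t^2}M(-t)$ gives $M(t)(i\partial_t)M(-t)=i\partial_t+\tfrac{|x|^2}{2t^2}$. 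The two $|x|^2/(2t^2)$ contributions cancel upon addition, producing the clean formula $\widetilde{L}_V=L_V+\tfrac{i}{t}(x\cdot\nabla+1)$, where $L_V:=i\partial_t+\tfrac12\Delta_V$.

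Next I would expand $[\widetilde{L}_V,\,t^s(-\Delta_V)^{s/2}]$ piece by piece. The $\tfrac12\Delta_V$ part commutes with $(-\Delta_V)^{s/2}$ by functional calculus; the $i\partial_t$ part differentiates only the scalar $t^s$, producing $ist^{s-1}(-\Delta_V)^{s/2}$; the constant "$+1$" inside $(x\cdot\nabla+1)$ commutes through. The sole nontrivial contribution is $(i/t)\,[x\cdot\nabla,\,t^s(-\Delta_V)^{s/2}]=it^{s-1}[x\cdot\nabla,(-\Delta_V)^{s/2}]$. Summing yields exactly $it^{s-1}A(s)$, and conjugating back by $M(-t)(\cdot)M(t)$ recovers (\ref{1.11}). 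The two-dimensional hypothesis enters only through the value $n/2=1$ appearing in the "$+1$" above, which is algebraically invisible in $A(s)$ since it commutes with the fractional power; the same identity holds in arbitrary dimension.

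The main obstacle, as I see it, is giving rigorous meaning to the commutator $[x\cdot\nabla,(-\Delta_V)^{s/2}]$ for non-integer $s$, since the fractional power is defined purely via the spectral theorem while $x\cdot\nabla$ is an unbounded first-order operator. I would perform all the above manipulations first on the Schwartz class, using the Balakrishnan representation
\[
(-\Delta_V)^{s/2}=\tfrac{\sin(\pi s/2)}{\pi}\int_0^{\infty}\lambda^{s/2-1}(-\Delta_V)(\lambda-\Delta_V)^{-1}\,d\lambda\qquad(0<s<2),
\]
which reduces the problem to commuting $x\cdot\nabla$ with the resolvent $R(\lambda)=(\lambda-\Delta_V)^{-1}$ via $[x\cdot\nabla,R(\lambda)]=-R(\lambda)[x\cdot\nabla,-\Delta_V]R(\lambda)$ and $[x\cdot\nabla,-\Delta_V]=2\Delta+x\cdot\nabla V$, where the perturbative term $x\cdot\nabla V$ is controlled by hypothesis (H1); a density/closure argument then extends the identity to the natural domain. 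The quantitative bounds on $A(s)$ needed for the rest of the paper are established in a later section; for the present proposition only the algebraic commutator identity is required.
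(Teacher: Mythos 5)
Your argument is correct and establishes the identity, but it is packaged differently from the paper's Appendix I. The paper expands the commutator with the product $M(-t)\left(-t^{2}\Delta_{V}\right)^{\frac{s}{2}}M(t)$ via the Leibniz rule $[B,DE]=[B,D]E+D[B,E]$, which requires a string of preliminary lemmas computing $[i\partial_{t}+\frac{1}{2}\Delta,M(\pm t)]$ and $[i\partial_{t}+\frac{1}{2}\Delta_{V},(-t^{2}\Delta_{V})^{\frac{s}{2}}]$, and then tracks the cancellation of the $|x|^{2}/t^{2}$ terms through a two-step chain of identities. You instead conjugate the whole identity by $M(t)$ at the outset, so that $|J_{V}|^{s}(t)$ becomes the bare operator $t^{s}(-\Delta_{V})^{\frac{s}{2}}$ and the conjugated generator collapses to $L_{V}+\frac{i}{t}(x\cdot\nabla+1)$ after an immediate cancellation of the $|x|^{2}/(2t^{2})$ contributions; the commutator then splits into three terms, two of which are trivial by functional calculus. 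The two computations are algebraically equivalent --- your $\widetilde{L}_{V}$ encodes exactly what the paper's commutator lemmas for $M(\pm t)$ establish --- but the conjugation-first organization makes the cancellations transparent and isolates the single nontrivial ingredient, $[x\cdot\nabla,(-\Delta_{V})^{\frac{s}{2}}]$, in one place. Your closing remarks on rigor (the Balakrishnan representation and the resolvent commutator $[x\cdot\nabla,-\Delta_{V}]=2\Delta+x\cdot\nabla V$) are precisely the content the paper defers to Proposition \ref{proposition 1.01} and its proof, and your observation that the space dimension enters only through a constant commuting with the fractional power, so that the identity is in fact dimension-independent, is also consistent with the paper's computation for generic $n$.
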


We also have
\begin{proposition}\label{proposition 1.01}
Let $W:=2V+x\cdot \nabla V$. For $0<s<2,$ we obtain
\begin{eqnarray}
A(s)=c(s)\int_{0}^{\infty}\tau^{\frac{s}{2}}\left(\tau-\Delta_{V}\right)^{-1}W
\left(\tau-\Delta_{V}\right)^{-1}d\tau  \label{4.3.3}
\end{eqnarray}
in two space dimensions, where $c(s)^{-1}=\int_{0}^{\infty} \tau^{\frac{s}{2}-1}(\tau+1)^{-1}d\tau$.
\end{proposition}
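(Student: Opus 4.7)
The plan is to start from Balakrishnan's integral representation of fractional powers of a non-negative self-adjoint operator and differentiate under $[x\cdot\nabla,\cdot]$. Since $-\Delta_V\ge 0$ and $s/2\in(0,1)$, one has
\begin{equation*}
(-\Delta_V)^{s/2}=c(s)\int_0^\infty \tau^{s/2-1}(-\Delta_V)(\tau-\Delta_V)^{-1}\,d\tau,
\end{equation*}
where $c(s)=\sin(\pi s/2)/\pi$; this matches the normalisation in the statement because $\int_0^\infty \tau^{s/2-1}(\tau+1)^{-1}\,d\tau=\pi/\sin(\pi s/2)$.

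The key algebraic input is the commutator identity $[x\cdot\nabla,-\Delta_V]=-2(-\Delta_V)+W$, which follows from $[x\cdot\nabla,\Delta]=-2\Delta$ (a one-line direct calculation) together with $[x\cdot\nabla,V]=x\cdot\nabla V$ and the definition $W=2V+x\cdot\nabla V$. Plugging this into the resolvent commutator formula $[x\cdot\nabla,(\tau-\Delta_V)^{-1}]=-(\tau-\Delta_V)^{-1}[x\cdot\nabla,-\Delta_V](\tau-\Delta_V)^{-1}$ and using the algebraic identity $(-\Delta_V)(\tau-\Delta_V)^{-1}=I-\tau(\tau-\Delta_V)^{-1}$, I obtain
\begin{equation*}
[x\cdot\nabla,(-\Delta_V)^{s/2}]=-2c(s)\!\int_0^\infty\!\tau^{s/2}(\tau-\Delta_V)^{-1}(-\Delta_V)(\tau-\Delta_V)^{-1}d\tau+c(s)\!\int_0^\infty\!\tau^{s/2}(\tau-\Delta_V)^{-1}W(\tau-\Delta_V)^{-1}d\tau,
\end{equation*}
and the second integral is already the desired right-hand side of \eqref{4.3.3}.

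For the $W$-free integral I would integrate by parts in $\tau$, exploiting $(\tau-\Delta_V)^{-1}(-\Delta_V)(\tau-\Delta_V)^{-1}=-\tfrac{d}{d\tau}[(-\Delta_V)(\tau-\Delta_V)^{-1}]$. For $0<s<2$ the boundary terms vanish at both $\tau=0$ and $\tau=\infty$, and what survives is $\tfrac{s}{2}\int_0^\infty \tau^{s/2-1}(-\Delta_V)(\tau-\Delta_V)^{-1}d\tau=\tfrac{s}{2}\,c(s)^{-1}(-\Delta_V)^{s/2}$ by Balakrishnan again. Multiplying by $-2c(s)$ turns the $W$-free term into $-s(-\Delta_V)^{s/2}$, which cancels the $s(-\Delta_V)^{s/2}$ piece of $A(s)$ and leaves exactly \eqref{4.3.3}.

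The main obstacle is the rigorous justification of formal manipulations: interchanging $[x\cdot\nabla,\cdot]$ with the $\tau$-integral, the operator-valued integration by parts, and the decay of the boundary terms. I would run the computation on the Schwartz class, using (H1) to get the pointwise bound $|W|\lesssim\langle x\rangle^{-\beta}$ together with the uniform spectral bound $\|(\tau-\Delta_V)^{-1}\|_{L^2\to L^2}\le 1/\tau$ to control the integrands; the range $0<s<2$ is precisely what makes $\tau^{s/2-1}$ integrable at $0$ and $\tau^{s/2-2}$ integrable at infinity, so all integrals converge and the formal identity upgrades to an equality of operators on a dense core.
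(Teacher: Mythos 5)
Your proposal is correct and follows essentially the same route as the paper's proof in Appendix I: both insert the commutator $[x\cdot\nabla,\cdot]$ into the Balakrishnan representation, use $[x\cdot\nabla,-\Delta_{V}]=2\Delta_{V}+W$ together with the resolvent commutator formula to reduce everything to $\tau(\tau-\Delta_{V})^{-1}W(\tau-\Delta_{V})^{-1}$ plus a $W$-free term, and then cancel $s(-\Delta_{V})^{\frac{s}{2}}$ via the integration by parts $s(-\Delta_{V})^{\frac{s}{2}}=-2c(s)\int_{0}^{\infty}\tau^{\frac{s}{2}}\Delta_{V}(\tau-\Delta_{V})^{-2}\,d\tau$. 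Your closing remarks on justifying the formal manipulations on a dense core go slightly beyond what the paper records, but the algebraic skeleton is identical.
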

Proposition \ref{proposition 1.1} and  Proposition \ref{proposition 1.01}  are well-known from \cite{CGV2014}
for the case of one-dimensional Schr\"{o}dinger equation (\ref{NLS}) with a potential.
For the convenience of readers, we give proofs of these propositions in the appendix I of this paper.



\section{Strichartz Estimates}
\label{Strichartz Estimates}
Strichartz estimates are important tools  to  investigate  asymptotic behavior of solutions
to some evolution equations, such as  Schr\"{o}dinger equations and wave equations.
The well known homogeneous Strichartz estimate
\begin{eqnarray}
\left\|e^{\frac{i}{2}t\Delta}f\right\|_{L^{p_{2}}(\mathbb{R};L^{q_{2}}(\mathbb{R}^{n}))}\lesssim\|f\|_{L^{2}(\mathbb{R}^{n})} \notag
\end{eqnarray}
and  inhomogeneous  Strichartz estimate
\begin{eqnarray}
\left\|\int_{s<t} e^{\frac{i}{2}(t-s)\Delta}F(s, \cdot)ds\right\|_{L^{p_{2}}(\mathbb{R};L^{q_{2}}(\mathbb{R}^{n}))}\lesssim \|F\|_{L^{p_{1}^{\prime}}(\mathbb{R};L^{q_{1}^{\prime}}(\mathbb{R}^{n}))} \notag
\end{eqnarray}
hold for $n\geq 2$, $f \in  L^{2}(\mathbb{R}^{n})$, and $F \in L^{p_{1}^{\prime}}(\mathbb{R};L^{q_{1}^{\prime}}(\mathbb{R}^{n}))$
if $\frac{2}{p_{j}}+\frac{n}{q_{j}}=\frac{n}{2}, 2\leq p_{j} \leq \infty,2\leq q_{j} \leq \frac{2n}{n-2}, q_{j} \neq \infty,$
$p_{j}^{\prime},q_{j}^{\prime}$ are the dual exponents of $p_{j}$ and $q_{j}, j=1, 2$ (see e.g. \cite{KeTao98}).
We note that both endpoints $(p_{j},q_{j})=(\infty,2)$ and $(p_{j},q_{j})=(2, \frac{2n}{n-2})$ are included in the situation of $n \geq 3$, and only the endpoint $(p_{j},q_{j})=(\infty,2)$ is included in the case of $n=2$ for $j=1, 2$.

In recent years, a large number of works on Strichartz estimates for  Schr\"{o}dinger equations
with potentials $V(x)$ have been investigated (see e.g. \cite{BPSTZ2004},
 \cite{BPSTZ2003}, \cite{GV2012}, \cite{AF2008},  \cite{AFVV}, \cite{Mi07}, \cite{Mo18}, \cite{ste}).
 However, the study of Strichartz estimates for 2d Schr\"{o}dinger equations is essentially restricted to
 the cases of smallness of the magnetic potential and electric potential (see \cite{ste}),
 smallness of the magnetic potential while the electric potential can be large (see \cite{AF2008}), very fast decay of the potential and assumption that zero is a regular point (see \cite{Mi07}), or $V \geq 0$ and $\partial_r (rV) \leq 0$ (see \cite{Mo18}).
In \cite{BPSTZ2003}, Strichartz estimates for Schr\"{o}dinger equations
with the inverse-square potential $\frac{a}{|x|^2}$  in two space dimensions were considered
by Burq, Planchon, Stalker and Tahvildar-Zadeh, where $a$ is a real number.
In \cite{Mi07}, Mizumachi presented Strichartz estimates by the $L^{\infty}-L^{1}$ estimates in \cite{Sch}.
To state the dispersive estimate in \cite{Sch}, we recall the notion zero is a regular point as follow:
\begin{definition}\rm{(see \cite{Sch})}
\label{def}
Let $V\not\equiv 0$ and set $U=signV, v=|V|^{\frac{1}{2}}$. Let $P_v$ be the orthogonal projection
onto $v$ and set $Q=I-P_v$. And let
\begin{equation*}
(G_0 f)(x):=-\frac{1}{2\pi}\int_{\mathbb{R}^2}\log|x-y|f(y)dy.
\end{equation*}
We say that zero is a regular point of the spectrum of $-\Delta_V$, provided $Q(U+vG_0v)Q$ is
invertible on $QL^2(\mathbb{R}^2)$.
\end{definition}
We have
\begin{proposition}
\label{Sch}
\rm{(Dispersive Estimate in \cite{Sch})}
Let $V:\mathbb{R}^2\rightarrow \mathbb{R}$ be a measurable function such that $|V(x)|\leq C(1+|x|)^{-\beta},
\beta>3.$ Assume in addition that zero is a regular point of the spectrum of  $-\Delta_V$. Then we have
\begin{equation*}
\|e^{-\frac{i}{2}t\Delta_V}P_{ac}(H)f\|_{L^{\infty}(\mathbb{R}^2)}\lesssim|t|^{-1}\|f\|_{L^{1}(\mathbb{R}^2)}
\end{equation*}
for all $f \in L^{1}(\mathbb{R}^2)$.
\end{proposition}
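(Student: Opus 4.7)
The plan is to derive the dispersive bound through the spectral representation of $e^{-it\Delta_V/2}P_{ac}$ expressed in terms of boundary values of the resolvent $R_V(z) = (-\Delta_V - z)^{-1}$. Since the spectrum of $-\Delta_V$ is purely absolutely continuous on $[0,\infty)$ under the hypotheses, the Stone formula yields
\begin{equation*}
e^{-\frac{i}{2}t\Delta_V} P_{ac} f = \frac{1}{2\pi i}\int_0^\infty e^{\frac{i}{2}t\lambda}\bigl[R_V(\lambda+i0) - R_V(\lambda - i0)\bigr] f \, d\lambda.
\end{equation*}
The idea is to introduce a smooth cutoff $\chi(\lambda)$ supported near $\lambda = 0$ and split the integral into a low-energy piece $I_{\mathrm{low}}(t)$ and a high-energy piece $I_{\mathrm{high}}(t)$, treating each by a completely different strategy.

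For the high-energy part, I would use the symmetric (Jensen–Nenciu) resolvent identity together with a Born series expansion of finite length,
\begin{equation*}
R_V(\lambda\pm i0) = \sum_{k=0}^{N} (-1)^k R_0(\lambda\pm i0)\bigl(V R_0(\lambda\pm i0)\bigr)^k + R_V^{(N+1)}(\lambda\pm i0),
\end{equation*}
and reduce matters to oscillatory integrals involving the free resolvent kernel, which in 2D is expressed via the Hankel function $H_0^{(1)}(\sqrt{\lambda}|x-y|)$. Each iterated term gives an $L^1 \to L^\infty$ bound of order $|t|^{-1}$ either by the classical free dispersive estimate for $e^{-it\Delta/2}$ after summing the spectral integral, or by a direct stationary-phase / integration-by-parts argument using the short-range decay $\langle x\rangle^{-\beta}$ with $\beta>3$. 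The remainder term is handled through the limiting absorption principle with weights.

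For the low-energy part, I would use the symmetric resolvent identity
\begin{equation*}
R_V(z) = R_0(z) - R_0(z)v\bigl[U + vR_0(z)v\bigr]^{-1} v R_0(z),
\end{equation*}
and then reduce the inversion of $M(z):= U + vR_0(z)v$ near $z=0$ by a Feshbach/Grushin argument based on the decomposition $I = P_v + Q$. The expansion of $vR_0(z)v$ around $z=0$ in 2D has the form $v G_0 v + c\log z \cdot P_v + o(1)$, so that the regularity hypothesis (invertibility of $Q(U+vG_0v)Q$) is exactly what allows one to explicitly isolate and invert the singular logarithmic component on the one-dimensional range of $P_v$, producing an asymptotic expansion of $R_V(\lambda\pm i0)$ whose leading $\log$-singularities cancel between the $\pm i0$ limits. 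Substituting this into the Stone formula and integrating by parts once in $\lambda$ against the oscillatory factor $e^{it\lambda/2}$ gives the desired $|t|^{-1}$ gain.

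The hard part is unquestionably the low-energy analysis: in dimension two the free resolvent kernel $\tfrac{i}{4}H_0^{(1)}(\sqrt{\lambda}|x-y|)$ has a logarithmic singularity as $\lambda\to 0^+$, so the Jensen–Nenciu type expansion of $M(z)^{-1}$ must track several different scales of smallness and is considerably more delicate than its odd-dimensional counterpart. The regularity assumption on zero is precisely the minimal condition that guarantees these log-singularities cancel after one integration by parts, so that the endpoint $L^1 \to L^\infty$ bound of order $|t|^{-1}$ survives. Once that cancellation is established, assembling the high- and low-energy contributions yields the stated estimate.
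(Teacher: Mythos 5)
This proposition is not proved in the paper at all: it is imported verbatim from Schlag \cite{Sch}, so there is no ``paper's own proof'' to compare against. What you have written is, in effect, an accurate architectural summary of the proof in the cited reference itself: Stone's formula for $e^{-\frac{i}{2}t\Delta_V}P_{ac}$, a high/low energy splitting by a cutoff near $\lambda=0$, a finite Born series plus limiting absorption at high energy, and at low energy the symmetric resolvent identity with $U=\operatorname{sign}V$, $v=|V|^{1/2}$, where the regularity of zero enters precisely as the invertibility of $Q(U+vG_0v)Q$ on $QL^2$ needed to invert $M(z)=U+vR_0(z)v$ despite the $\log z$ singularity of the two-dimensional free resolvent. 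So the approach is not merely viable; it is the approach.

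That said, as a proof it is a roadmap rather than an argument, and the gap is concentrated exactly where you say the difficulty lies. The low-energy expansion of $M(z)^{-1}$ in 2D requires tracking several competing scales ($\log\lambda$, $(\log\lambda)^{-1}$, and power corrections) with error bounds that are uniform enough to survive the oscillatory integration in $\lambda$; the claim that ``the leading $\log$-singularities cancel between the $\pm i0$ limits and one integration by parts gives $|t|^{-1}$'' is asserted, not established, and in \cite{Sch} this step is carried out via specific oscillatory-integral lemmas applied to the spectral density rather than a naive integration by parts. Similarly, the high-energy remainder ``handled through the limiting absorption principle with weights'' needs the quantitative decay of the weighted resolvent norms in $\lambda$ to make the $\lambda$-integral converge to an $O(|t|^{-1})$ bound, which again requires the hypothesis $\beta>3$ in a way your sketch does not make explicit. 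None of this is a wrong turn --- it is simply the content of the cited theorem --- but if the proposition were to be proved rather than quoted, these are the steps that would have to be supplied.
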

The requirement that zero is a regular point is the analogue of the usual condition that zero is neither an eigenvalue nor a resonance (generalized eigenvalue) of $-\Delta_V$.
Under the assumptions of Proposition \ref{Sch}, the spectrum of $-\Delta_V$ on $[0, \infty)$
is purely absolutely continuous, and that the spectrum is pure point on $(-\infty, 0)$ with at
most finitely many eigenvalues of finite multiplicities (See \cite{Sch}). Moreover, any point on the real line different from zero is not a resonance due to the results in \cite{GV2007}.
Therefore, unique candidate for resonant point is the origin and the assumption zero is regular means that zero is not resonance too.

Next, we need the definition of admissible couples appearing in the Schrichartz estimates. The couple $(p,q)$ of positive numbers $p \geq 2, q \geq2 $ is called Schr\"{o}dinger admissible
if it satisfies
\begin{eqnarray}
\frac{1}{p}+\frac{1}{q}=\frac{1}{2},  (p,q)\neq(2,\infty).  \label{admissible pair}
\end{eqnarray}

We have the following
homogeneous  Strichartz estimate by Proposition \ref{Sch}, Theorem 6.1 in \cite{Agmon}, and the methods in \cite{KeTao98}.
We omit the proof.

\begin{proposition} {\label {proposition 2.2}}
\rm{(Homogeneous  Strichartz Estimate)}
Let $(p,q)$ be a Schr\"{o}dinger admissible pair. If $\rm{(H1)} - \rm{(H3)}$ are satisfied,
then we obtain
\begin{eqnarray}
\left\|e^{\frac{i}{2}t\Delta_{V}}f\right\|_{L^{p}(\mathbb{R};L^{q}(\mathbb{R}^{2}))}\lesssim\|f\|_{L^{2}(\mathbb{R}^{2})}
\end{eqnarray}
holds for all $f \in  L^{2}(\mathbb{R}^{2})$.
\end{proposition}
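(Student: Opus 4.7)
The plan is to derive the estimate from the standard abstract Keel--Tao framework, which requires two ingredients for the propagator $U_V(t) := e^{\frac{i}{2}t\Delta_V}$: an $L^2$-boundedness (energy estimate) and an $L^1 \to L^\infty$ dispersive decay of order $|t|^{-1}$. Once both are in hand, the mixed-norm estimate for every non-endpoint admissible pair $(p,q)$ with $\frac{1}{p}+\frac{1}{q}=\frac12$, $(p,q)\neq(2,\infty)$, follows by interpolation and the Hardy--Littlewood--Sobolev inequality exactly as in \cite{KeTao98}.

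First I would record the energy identity $\|U_V(t)f\|_{L^2(\mathbb{R}^2)}=\|f\|_{L^2(\mathbb{R}^2)}$, which is immediate from the self-adjointness of $-\Delta_V$ (assumption (H1)) together with the spectral theorem. Next I would invoke Proposition \ref{Sch} to obtain
\begin{equation*}
\|U_V(t)P_{ac}(-\Delta_V) f\|_{L^\infty(\mathbb{R}^2)} \lesssim |t|^{-1}\|f\|_{L^1(\mathbb{R}^2)}.
\end{equation*}
The crucial observation is that, under hypotheses (H1)--(H3), the discussion following the assumptions in the introduction shows (via Agmon's Theorem 6.1 in \cite{Agmon} and the absence of positive eigenvalues in \cite{Agmon2}) that the spectrum of $-\Delta_V$ is purely absolutely continuous on $[0,\infty)$, while (H2) excludes negative eigenvalues. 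Hence $P_{ac}(-\Delta_V)=I$ on $L^2(\mathbb{R}^2)$, and the dispersive estimate holds for every $f \in L^1 \cap L^2$ without any spectral projection.

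Interpolating the dispersive bound with $L^2$ conservation via the Riesz--Thorin theorem then yields
\begin{equation*}
\|U_V(t)f\|_{L^q(\mathbb{R}^2)} \lesssim |t|^{-(1-\frac{2}{q})}\|f\|_{L^{q'}(\mathbb{R}^2)}
\end{equation*}
for every $2\leq q <\infty$. Plugging this decay estimate into the Keel--Tao $TT^*$ machinery (Theorem 1.2 in \cite{KeTao98}) gives the homogeneous Strichartz estimate for every admissible pair in the sense of (\ref{admissible pair}); in 2D the $TT^*$ argument succeeds for all non-endpoint pairs, and the excluded pair $(2,\infty)$ is precisely the one already ruled out by the definition (\ref{admissible pair}), so no endpoint issue arises.

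I expect the only real subtlety to be the reduction $P_{ac}=I$: one must combine the three assumptions (H1)--(H3) to rule out embedded eigenvalues, negative eigenvalues and a resonance at zero simultaneously, since Proposition \ref{Sch} only provides the decay on the absolutely continuous subspace. Once this is observed the argument is a routine invocation of \cite{KeTao98}, which is why the authors omit the proof.
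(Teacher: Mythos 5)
Your proposal is correct and follows exactly the route the paper indicates (the paper omits the proof but cites precisely these ingredients: Proposition \ref{Sch} for the $|t|^{-1}$ dispersive bound, Agmon's Theorem 6.1 to conclude the spectrum is purely absolutely continuous so that $P_{ac}(-\Delta_V)=I$, and the Keel--Tao $TT^*$ machinery for non-endpoint admissible pairs). The one subtlety you flag --- reducing $P_{ac}$ to the identity under (H1)--(H3) --- is indeed the point the authors handle in the discussion following the hypotheses in the introduction, so nothing is missing.
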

By using Proposition \ref{proposition 2.2} and a result of
Christ-Kiselev lemma (Lemma A.1 in \cite{BM16}), we have the
following result. We skip the proof here.

\begin{proposition} {\label {theorem 2.2}}
\rm{(Inhomogeneous  Strichartz Estimate)}
Let $a, b \in \mathbb{R}$ and  let $(p_{j},q_{j})$ be  Schr\"{o}dinger admissible pairs for $j=1, 2$. Assume $V(x)$ satisfy the  hypotheses $\rm{(H1)}-\rm{(H3)}$.
Then we have
\begin{eqnarray}
\left\|\int_{a}^{t}e^{\frac{i}{2}(t-s)\Delta_{V}}F(s,\cdot)ds\right\|_{L^{p_{2}}( [a, b];L^{q_{2}}(\mathbb{R}^{2}))}\lesssim\|F\|_{L^{p_{1}^{\prime}}([a, b];L^{q_{1}^{\prime}}(\mathbb{R}^{2}))}, \label{3.1}\\
\forall F\in L_{loc}^{1}([a, b],L^{2}(\mathbb{R}^{2}))\cap L^{p_{1}^{\prime}}([a, b],L^{q_{1}^{\prime}}(\mathbb{R}^{2})),  \notag
\end{eqnarray}
where $p_{j}^{\prime},q_{j}^{\prime}$ are the dual exponents of $p_{j}$ and $q_{j}, j=1, 2$.
\end{proposition}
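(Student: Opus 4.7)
The plan is to derive the inhomogeneous estimate by the standard two-step route: first establish an untruncated estimate on all of $\mathbb{R}$ via a $TT^{*}$ argument starting from the homogeneous estimate of Proposition~\ref{proposition 2.2}, and then pass to the retarded ($s<t$) version by the Christ--Kiselev lemma, followed by a trivial restriction to $[a,b]$.

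First, since $-\Delta_V$ is self-adjoint on $L^2(\mathbb{R}^2)$, the operator $T_j f := e^{\frac{i}{2}t\Delta_V}f$ viewed as a map $L^2(\mathbb{R}^2)\to L^{p_j}(\mathbb{R}; L^{q_j}(\mathbb{R}^2))$ is bounded by Proposition~\ref{proposition 2.2}, and its adjoint is
\[
T_j^{*}G = \int_{\mathbb{R}} e^{-\frac{i}{2}s\Delta_V} G(s,\cdot)\,ds,
\qquad
\bigl\|T_j^{*}G\bigr\|_{L^2(\mathbb{R}^2)} \lesssim \|G\|_{L^{p_j'}(\mathbb{R};L^{q_j'}(\mathbb{R}^2))}.
\]
Composing $T_2 T_1^{*}$ and using the group property of $e^{\frac{i}{2}t\Delta_V}$ (which follows from $-\Delta_V$ self-adjoint) gives the untruncated bound
\[
\left\|\int_{\mathbb{R}} e^{\frac{i}{2}(t-s)\Delta_V} F(s,\cdot)\,ds\right\|_{L^{p_2}(\mathbb{R};L^{q_2}(\mathbb{R}^2))} \lesssim \|F\|_{L^{p_1'}(\mathbb{R};L^{q_1'}(\mathbb{R}^2))}.
\]

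Next, invoke the Christ--Kiselev lemma (Lemma A.1 in \cite{BM16}) to replace the integration over $\mathbb{R}$ by the truncated range $\{s<t\}$. The hypothesis needed for Christ--Kiselev is $p_1'<p_2$, and this is automatic in our setting: the 2d admissibility condition $\frac{1}{p}+\frac{1}{q}=\frac{1}{2}$ with $p,q\geq 2$ and $(p,q)\neq(2,\infty)$ forces $p>2$ whenever $p<\infty$, so either $p_1<\infty$ and $p_1'<2\leq p_2$, or $p_1=\infty$ and $p_1'=1<p_2$. Hence the retarded estimate holds on all of $\mathbb{R}$, and restricting to $t\in[a,b]$ together with replacing $F$ by $F\mathbf{1}_{[a,b]}$ yields \eqref{3.1}.

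The hard part of the whole argument actually lies upstream: it is the homogeneous Strichartz estimate of Proposition~\ref{proposition 2.2}, which combines Schlag's dispersive bound (Proposition~\ref{Sch}), the absence of eigenvalues and of a zero resonance guaranteed by $\mathrm{(H1)}$--$\mathrm{(H3)}$, and the Keel--Tao machinery. Given that input, the present proposition is essentially bookkeeping: the only real check is the strict inequality $p_1'<p_2$ needed for Christ--Kiselev, and the 2d endpoint exclusion makes this free.
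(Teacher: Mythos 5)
Your argument is correct and follows exactly the route the paper indicates for this proposition (which it states without proof): the homogeneous estimate of Proposition~\ref{proposition 2.2} plus a $TT^{*}$ composition to get the untruncated inhomogeneous bound, then the Christ--Kiselev lemma of \cite{BM16} to pass to the retarded integral, with the condition $p_1'<p_2$ guaranteed by the exclusion of the $(2,\infty)$ endpoint in two dimensions. Nothing further is needed.
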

\section{The estimates of  $A(s)$}
\label{The estimates of  $A(s)$}
To derive estimates of  $A(s)$,
 we use free resolvent estimates, following the approach of \cite{Li2017}.
\begin{lemma} \rm{(Free Resolvent Estimates)}
\label{Lemma Resolvent Estimate 2 }
\begin{description}
  \item[i)] For any $ 1< q <\infty, \ \ 0 < s_0 \leq 1,$ one can find $C  = C(q,s_0) > 0$ so that for any $\tau >0$ we have
  \begin{equation} \label{ReEs2}
 \left\|\left(\tau-\Delta\right)^{-1}f\right\|_{L^{q}(\mathbb{R}^{2})}
\leq C \tau^{-s_0} \|f\|_{L^{k}(\mathbb{R}^{2})}, \ \  \frac{1}{k}=\frac{1}{q}+1-s_0;
\end{equation}
  \item[ii)] For any  $$ 1< q <\infty, \ \ 0 < s_0 \leq 1, \ \  a > 2(1-s_0), $$ one can find $C=C(q,s_0,a)>0$ so that  for any $\tau >0$ we have
  \begin{equation} \label{ReEs21} \left\|\left(\tau-\Delta\right)^{-1}<x>^{-a}f\right\|_{L^{q}(\mathbb{R}^{2})}
\leq C \tau^{-s_0} \|f\|_{L^{q}(\mathbb{R}^{2})};
\end{equation}
  \item[iii)] For any  $$ 1< q <\infty, \ \ 0 < s_0 \leq 1, \ \  a > 2(1-s_0), $$ one can find $C=C(q,s_0,a)>0$ so that  for any $\tau >0$ we have
  \begin{equation}\label{ReEs22}
\left\|<x>^{-a}\left(\tau-\Delta\right)^{-1}f\right\|_{L^{q}(\mathbb{R}^{2})}
\leq C \tau^{-s_0} \|f\|_{L^{q}(\mathbb{R}^{2})}.
\end{equation}
\end{description}
\end{lemma}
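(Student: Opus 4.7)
The plan is to derive all three estimates from the subordination identity
\[
(\tau-\Delta)^{-1}=\int_{0}^{\infty}e^{-\tau t}\,e^{t\Delta}\,dt,\qquad \tau>0,
\]
which holds first on $L^{2}$ by the spectral theorem and then on $L^{k}$ via the explicit positive heat kernel $k_{t}(x-y)=(4\pi t)^{-1}\exp(-|x-y|^{2}/(4t))$. The starting ingredient is the standard dispersive bound
\[
\|e^{t\Delta}f\|_{L^{q}(\mathbb{R}^{2})}\le C\,t^{-(1/k-1/q)}\|f\|_{L^{k}(\mathbb{R}^{2})},\qquad 1\le k\le q\le\infty,
\]
which follows at once from Young's inequality together with $\|k_{t}\|_{L^{r}}\sim t^{-(1-1/r)}$ for $1+1/q=1/r+1/k$.

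For part \textbf{(i)}, I would insert the displayed dispersive bound with $1/k-1/q=1-s_{0}$ into the subordination integral, obtaining
\[
\|(\tau-\Delta)^{-1}f\|_{L^{q}}\le C\|f\|_{L^{k}}\int_{0}^{\infty}e^{-\tau t}\,t^{s_{0}-1}\,dt=C\,\Gamma(s_{0})\,\tau^{-s_{0}}\|f\|_{L^{k}},
\]
where the $\Gamma$-function integral requires $s_{0}>0$ and the heat-kernel exponent requires $s_{0}\le 1$ and (implicitly) $s_{0}\ge 1/q$, consistent with the stated hypotheses.

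For part \textbf{(ii)}, I would reduce to (i) by Hölder's inequality. Setting $g=\langle x\rangle^{-a}f$ and choosing $1/s=1-s_{0}$, the factorization $1/k=1/q+1/s$ yields
\[
\|g\|_{L^{k}}\le \|\langle x\rangle^{-a}\|_{L^{s}}\|f\|_{L^{q}},
\]
and $\langle x\rangle^{-a}\in L^{s}(\mathbb{R}^{2})$ precisely when $as>2$, which under $1/s=1-s_{0}$ is equivalent to the assumption $a>2(1-s_{0})$. Applying (i) to $g$ then gives (ii). Part \textbf{(iii)} is obtained by duality: since $(\tau-\Delta)^{-1}$ and multiplication by $\langle x\rangle^{-a}$ are both symmetric,
\[
\bigl\langle \langle x\rangle^{-a}(\tau-\Delta)^{-1}f,g\bigr\rangle=\bigl\langle f,(\tau-\Delta)^{-1}\langle x\rangle^{-a}g\bigr\rangle,
\]
and testing against $g\in L^{q'}$ with $\|g\|_{L^{q'}}\le 1$ reduces (iii) to (ii) applied with $q$ replaced by $q'$ (the condition $a>2(1-s_{0})$ is symmetric in $q\leftrightarrow q'$).

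I do not expect any serious obstacle in this argument: the scheme is entirely a combination of the heat kernel dispersive bound, a gamma-function integration, Hölder, and duality. The only nontrivial point is the bookkeeping of exponents, namely verifying that with $1/k=1/q+1-s_{0}$ and $1/s=1-s_{0}$ all the intermediate Lebesgue indices lie in $[1,\infty]$ throughout the admissible range of $q$ and $s_{0}$, and that the sharp threshold $a>2(1-s_{0})$ emerges naturally from the $L^{s}$-integrability of the weight $\langle x\rangle^{-a}$ in two dimensions.
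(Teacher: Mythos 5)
Your argument is correct. Parts (ii) and (iii) coincide with the paper's own proof: the weighted estimate is reduced to (i) by H\"older's inequality, with the threshold $a>2(1-s_0)$ arising exactly as the condition $\|\langle x\rangle^{-a}\|_{L^{s}(\mathbb{R}^{2})}<\infty$ for $1/s=1/k-1/q=1-s_0$, and (iii) then follows from (ii) by duality. Only in part (i) do you take a genuinely different route: the paper computes the Green function explicitly as $(2\pi)^{-1}K_{0}(\sqrt{\tau}|x|)$, invokes the logarithmic/exponential bounds on the modified Bessel function to get $K_{0}(|\cdot|)\in L^{m}(\mathbb{R}^{2})$ for every $m\in[1,\infty)$, obtains $\|K_{0}(\sqrt{\tau}\,\cdot)\|_{L^{m}}=c\,\tau^{-1/m}$ by scaling, and concludes with Young's inequality taking $s_0=1/m$; you instead subordinate the resolvent to the heat semigroup, use the Gaussian $L^{k}\to L^{q}$ decay $t^{-(1/k-1/q)}$, and evaluate a Gamma integral. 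The two computations are equivalent in substance (the Laplace-transform representation of $K_{0}$ is precisely your subordination formula applied to the kernel), but your version avoids quoting Bessel asymptotics and has the advantage of being the same mechanism the paper uses later for the perturbed resolvent $(\tau-\Delta_{V})^{-1}$ in Lemma \ref{Lemma A(s)} and Lemma \ref{Lemma Jv0}, where only a Gaussian upper bound for $e^{t\Delta_{V}}$ is available. Both proofs carry the same implicit restriction $s_0\geq 1/q$ (equivalently $k\geq 1$), which you correctly flag and which is also needed, tacitly, in the paper's application of Young's inequality.
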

\begin{proof} To prove (\ref{ReEs2}) we take advantage of the fact that the  Green function
$$ G(x-y;\tau) = \left(\tau-\Delta\right)^{-1}(x-y) $$
of the operator $\left(\tau-\Delta\right)^{-1}$ can be computed explicitly, indeed we have
$$  G(x;\tau) = (2\pi)^{-2}\int_{\mathbb{R}^2} e^{-\mathrm{i} x\xi} \frac{d\xi}{\tau+|\xi|^2} = (2\pi)^{-1} K_{0}(\sqrt{\tau}|x|), $$
where $K_0(r)$ is the modified Bessel function of order $0.$ We have the following  estimates of $K_0(r),$
\begin{equation}\label{eq.be1}
    |K_0(r)| \lesssim
\left\{
  \begin{array}{ll}
    |\ln r |, & \hbox{if $0 \leq r \leq 1$;} \\
    e^{-r}/ \sqrt{r}, & \hbox{if $r>1.$}
  \end{array}
\right.
\end{equation}
This estimate implies $K_0(|x|) \in L^m(\mathbb{R}^2)$ for any $m \in [1,\infty).$ In this way we deduce
$$ \int_{\mathbb{R}^2} \left| G(x;\tau) \right|^m d x = (2\pi)^{-m}  \int_{\mathbb{R}^2} \left| K_{0}(\sqrt{\tau}|x|) \right|^m d x $$
$$\hspace{3.2cm} = \frac{(2\pi)^{-m}}{\tau} \int_{\mathbb{R}^2} \left| K_{0}(|y|) \right|^m d y = \frac{c_1}{\tau},$$
where $y=\sqrt{\tau}|x|$ and we can write
\begin{equation}\label{eq.be2}
   \left\|K_0(\sqrt{\tau} \ \cdot)\right\|_{L^{m}(\mathbb{R}^{2})} = \frac{c_2}{\tau^{1/m}}, \ \forall m \in [1,\infty).
\end{equation}
Applying the Young inequality
$$ \left\|\left(\tau-\Delta\right)^{-1}f\right\|_{L^{q}(\mathbb{R}^{2})} = \left\|K_0(\sqrt{\tau}\  \cdot \ )*f\right\|_{L^{q}(\mathbb{R}^{2})} $$
$$\hspace{4.5cm} \leq \left\|K_0(\sqrt{\tau}\  \cdot \ )\right\|_{L^{m}(\mathbb{R}^{2})}\|f\|_{L^{k}(\mathbb{R}^{2})},$$
where $1+1/q=1/m+1/k,$ combining with \eqref{eq.be2} and choosing $s_0 = 1/m$, we deduce \eqref{ReEs2}. So the assertion i) is verified.

To get \eqref{ReEs21}, we apply the estimate  \eqref{ReEs2}
$$ \left\|\left(\tau-\Delta\right)^{-1}<x>^{-a}f\right\|_{L^{q}(\mathbb{R}^{2})} \lesssim \tau^{-s_0} \|<x>^{-a} f\|_{L^{k}(\mathbb{R}^{2})}, \ 1+1/q=s_0+1/k,$$
and via the H\"older inequality
$$ \|<x>^{-a} f\|_{L^{k}(\mathbb{R}^{2})} \lesssim \| f\|_{L^{q}(\mathbb{R}^{2})}, a> 2 \left( \frac{1}{k}- \frac{1}q \right) $$
and we arrive at \eqref{ReEs21}.

Finally (\ref{ReEs22}) follows from \eqref{ReEs21} by a duality argument.

This completes the proof.
\end{proof}

\begin{remark} The estimates
(\ref{ReEs2}), (\ref{ReEs21})  are not valid for $q=\infty,$ $s_0=0,$ $k=1,$ but they are valid for
$$ 1 \leq  q \leq \infty, \ \ 0 < s_0 \leq 1, \ \ \frac{1}{k}=\frac{1}{q}+1-s_0.$$
In particular,  they are true for
$q=k=1,s_0=1.$

Further, (\ref{ReEs22}) holds when $1 < q \leq \infty$ as in Lemma \ref{Lemma Resolvent Estimate 2 }, but also in the following case
$$ q=1, \ 0 < s_0 \leq 1, \  a >2(1-s_0).$$
\end{remark}

In Proposition \ref{proposition 1.01}, for $0<s<2$ we have
\begin{eqnarray}
A(s)=c(s)\int_{0}^{\infty}\tau^{\frac{s}{2}}\left(\tau-\Delta_{V}\right)^{-1}W
\left(\tau-\Delta_{V}\right)^{-1}d\tau,
\end{eqnarray}
where $W=2V+x\cdot \nabla V$.

Let $G(t,x,y) = e^{t\Delta_V }(x,y) $ be the heat kernel of the Schr\"{o}dinger operator
$-\Delta_{V},$ i.e. it solves
\begin{eqnarray}
\left\{
\begin{array}{l}
\partial_{t}G=(\Delta-V) G, \notag \\
G(0, x,y) = \delta (x-y), \notag
\end{array}
\right.
\end{eqnarray}
where $y\in \mathbb{R}^{2}$. Similarly, $$e^{t\Delta}(x, y)= c_3 t^{-1} \exp \left\{-\frac{ |x-y|^{2}}{4t}\right\}$$ is the heat kernel of $-\Delta,$
so that
$$ e^{ \alpha t \Delta}(x, y) = c_4 t^{-1} \exp \left\{-\frac{|x-y|^{2}}{4\alpha t}\right\}, \ \ \forall \alpha >0.$$
Since we consider the case $V(x) \geq 0,$ one can use Feynman-Kac formula and the the results in  \cite{Si05} deduce the
heat kernel estimate
\begin{eqnarray} \label{eq.hk1}
0 \leq e^{t \Delta_V}(x,y) \lesssim t^{-1} \exp \left\{-\frac{|x-y|^{2}}{4\beta t}\right\},
\end{eqnarray}
where $\beta>0$.
Using \eqref{eq.hk1},  we get the following estimate
\begin{lemma}
\label{lemma 4.1}
Assume that  the hypotheses $\rm {(H1) }$ and $\rm {(H2) }$ are satisfied. Then there exists positive $\beta$ such that
\begin{eqnarray}
 0 \leq e^{t\Delta_V }(x,y) \lesssim  e^{\beta t \Delta}(x, y).  \label{4.13}
\end{eqnarray}
\end{lemma}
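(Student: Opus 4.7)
The plan is to combine the Feynman--Kac representation of the heat semigroup $\{e^{t\Delta_V}\}_{t\geq 0}$ with the sign condition (H2) on the potential. The lemma is essentially a repackaging of the estimate \eqref{eq.hk1} already flagged in the text, so the task reduces to making the comparison with the free Gaussian explicit.

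First, I would invoke the classical Feynman--Kac formula to write
$$e^{t\Delta_V}(x,y) \;=\; e^{t\Delta}(x,y)\,\mathbb{E}_{x,y}^{t}\!\left[\exp\!\left(-\int_{0}^{t} V(B_s)\,ds\right)\right],$$
where $\mathbb{E}_{x,y}^{t}$ denotes expectation with respect to the normalized Brownian bridge from $x$ to $y$ over $[0,t]$, and $\{B_s\}$ is the associated Brownian motion. The precise form of this identity I would cite is the one in Simon \cite{Si05}, which also guarantees the nonnegativity of the kernel.

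Next, since (H2) gives $V(x) \geq 0$, the random variable $\int_{0}^{t} V(B_s)\,ds$ is almost surely nonnegative, so the Feynman--Kac weight lies in $[0,1]$. Combining this with the explicit expression $e^{t\Delta}(x,y) = c_{3}\,t^{-1}\exp(-|x-y|^{2}/(4t))$ recalled just before the lemma yields
$$0 \;\leq\; e^{t\Delta_V}(x,y) \;\leq\; c_{3}\,t^{-1}\exp\!\left(-\frac{|x-y|^{2}}{4t}\right),$$
which is exactly \eqref{eq.hk1} with $\beta = 1$. To recast the upper bound in the form $e^{\beta t\Delta}(x,y)$ appearing in \eqref{4.13}, I would fix any $\beta \geq 1$: then $\exp(-|x-y|^{2}/(4t)) \leq \exp(-|x-y|^{2}/(4\beta t))$, and since $e^{\beta t\Delta}(x,y) = (c_{3}/\beta)\,t^{-1}\exp(-|x-y|^{2}/(4\beta t))$, the desired bound follows after absorbing $\beta$ into the implicit constant in $\lesssim$.

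I do not foresee a genuine obstacle, as the argument is a direct consequence of Feynman--Kac together with the positivity of $V$, and the reference \cite{Si05} supplies the probabilistic machinery. The only minor care is in the bookkeeping of the constants relating $e^{t\Delta}(x,y)$ and $e^{\beta t\Delta}(x,y)$, which is swallowed by the $\lesssim$ notation; choosing $\beta>1$ would give strictly more Gaussian room and may be convenient in the later applications of \eqref{4.13} but is not needed for the lemma itself.
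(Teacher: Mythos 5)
Your argument is correct and is essentially the paper's own proof: the authors likewise obtain \eqref{4.13} by citing the Feynman--Kac formula and Simon's results in \cite{Si05}, using $V \geq 0$ to bound the Feynman--Kac weight and deduce the Gaussian estimate \eqref{eq.hk1}. Your write-up merely makes explicit the steps the paper leaves as a citation, including the harmless observation that $V\geq 0$ actually yields the bound with $\beta=1$.
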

Without assumption $V \geq 0$ one can use the estimates from
 \cite{AM1998} and deduce only the estimate
 \begin{eqnarray}
\left|e^{t\Delta_V}(x,y)\right| \lesssim  e^{\gamma t \Delta}(x, y) e^{\omega t},
\end{eqnarray}
where $\gamma>0$ and $\omega$ depends on $V(x)$. This is not sufficient for our goal to control the solution to (\ref{NLS}) with a potential $V(x)$.

Further,
we get the following Lemma
\begin{lemma}
\label{Lemma A(s)}
Let $V(x)$ satisfy $\rm{(H1)}$ and $\rm{(H2)}$.
We have
\begin{eqnarray}
\left\|A(s)f\right\|_{L^{q}(\mathbb{R}^{2})} \lesssim \left\|f\right\|_{L^{q^{\prime}}(\mathbb{R}^{2})}\label{A(s)1}
\end{eqnarray}
for  $1\leq q \leq 2$ and $1< s<2$, where $\frac{1}{q}+\frac{1}{q^{\prime}}=1$.
\end{lemma}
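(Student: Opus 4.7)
The plan is to reduce matters to the free resolvent via heat-kernel domination, and then to balance two applications of Lemma~\ref{Lemma Resolvent Estimate 2 }. I would start from the integral representation of Proposition~\ref{proposition 1.01} and combine it with the subordination identity $(\tau-\Delta_V)^{-1}=\int_0^\infty e^{-\tau t}e^{t\Delta_V}\,dt$. The heat-kernel bound of Lemma~\ref{lemma 4.1}, together with a change of variable $t\mapsto t/\gamma$, gives the pointwise kernel estimate $0\le(\tau-\Delta_V)^{-1}(x,y)\lesssim(\tau/\gamma-\Delta)^{-1}(x,y)$ for some $\gamma>0$. Since $|W(x)|\lesssim\langle x\rangle^{-\beta}$ with $\beta>3$ by (H1), after rescaling $\sigma=\tau/\gamma$ the problem reduces to bounding $\sigma^{s/2}\,\bigl\|(\sigma-\Delta)^{-1}\langle\cdot\rangle^{-\beta}(\sigma-\Delta)^{-1}f\bigr\|_{L^q}$ integrably in $\sigma$ by $\|f\|_{L^{q^\prime}}$.

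For fixed $\sigma$, the strategy is to apply (\ref{ReEs2}) to the outer free resolvent (yielding a gain $\sigma^{-s_0}$ and landing in $L^k$ with $1/k=1/q+1-s_0$), then split the weight as $\langle x\rangle^{-\beta}=\langle x\rangle^{-a_1}\langle x\rangle^{-a_2}$ and use H\"older with $1/k=1/m+1/q^\prime$, which forces $1/m=2/q-s_0$, and finally apply (\ref{ReEs22}) to gain $\sigma^{-s_1}$ and arrive at $\|f\|_{L^{q^\prime}}$. The compatibility conditions are $s_0,s_1\in(0,1]$, $m\ge 1$ (vacuous at $q=2$, forcing $s_0=1$ at $q=1$), $a_1>2/m$ so that $\langle x\rangle^{-a_1}\in L^m(\mathbb{R}^2)$, and $a_2>2(1-s_1)$ from (\ref{ReEs22}); they are reconciled because $a_1+a_2=\beta>3$. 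This yields
\[
\bigl\|(\sigma-\Delta)^{-1}\langle\cdot\rangle^{-\beta}(\sigma-\Delta)^{-1}f\bigr\|_{L^q}\lesssim\sigma^{-s_0-s_1}\|f\|_{L^{q^\prime}}.
\]

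The integrand then behaves like $\sigma^{s/2-s_0-s_1}$, which is integrable on $(0,1)$ precisely when $s_0+s_1<1+s/2$ and on $(1,\infty)$ precisely when $s_0+s_1>1+s/2$. Under the hypothesis $1<s<2$ one has $1+s/2\in(3/2,2)$, so both regimes can be realized with parameters in $(0,1]\times(0,1]$; I would therefore split $\int_0^\infty=\int_0^1+\int_1^\infty$ and use a different pair $(s_0,s_1)$ on each piece. The main obstacle is the joint bookkeeping of $(s_0,s_1)$, $(a_1,a_2)$ and $m$ at the endpoint $q=1$, where $s_0$ is pinned to $1$; a short computation shows that the small-$\sigma$ side forces $\beta>4/q-s$, which at $q=1$ degenerates to exactly $\beta>3$ as $s\to 1^+$, explaining why the hypotheses $1<s<2$ and $\beta>3$ in (H1) are precisely what this argument needs.
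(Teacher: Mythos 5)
Your argument is correct and follows essentially the same route as the paper: the representation of $A(s)$ from Proposition \ref{proposition 1.01}, heat-kernel domination (Lemma \ref{lemma 4.1}) to replace $(\tau-\Delta_V)^{-1}$ by a free resolvent, a split of the $\tau$-integral at $\tau=1$, and Lemma \ref{Lemma Resolvent Estimate 2 } applied with different parameters on each piece, with the decay of $W$ absorbed by H\"older. The only difference is bookkeeping: the paper keeps both resolvents acting on fixed spaces ($L^q\to L^q$ and $L^{q'}\to L^{q'}$, attaching weights $\langle x\rangle^{-a}$ to both of them with exponent $3/4$ in the small-$\tau$ regime), whereas you let the outer resolvent change the Lebesgue exponent and put the weight only on the inner one --- an equivalent redistribution of the same exponents leading to the same integrability conditions at $\tau\to 0$ (needing $s>1$) and $\tau\to\infty$ (needing $s<2$).
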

\begin{proof}
By  $(\tau+a)^{-1}=\int_{0}^{\infty}e^{-(a+\tau)t}dt,$ we obtain
\begin{eqnarray}
\left(\tau-\Delta_{V}\right)^{-1}f_\pm&=&\int_{0}^{\infty}e^{-\left(\tau-\Delta_{V}\right)t}f_\pm dt \notag \\
&=&\int_{0}^{\infty}e^{-\tau t}e^{t\Delta_{V}}f_\pm dt.  \label{4.23}
\end{eqnarray}
Since $e^{t\Delta_V}(x, y)$ is the heat kernel of $-\Delta_V$,
 then we have
\begin{eqnarray}
e^{t\Delta_{V}}f_\pm(x)=\int_{\mathbb{R}^{2}} e^{t\Delta_{V}}(x,y)f_\pm(y)dy. \label{add2}
\end{eqnarray}
By the estimate (\ref{4.13}) in Lemma \ref{lemma 4.1} and (\ref{add2}), from (\ref{4.23}) there exist positive $\beta$
such that
\begin{eqnarray}
\left| \left(\tau-\Delta_{V}\right)^{-1}f_\pm \right| & \lesssim &\int_{0}^{\infty}e^{-\tau t}
e^{\beta t \Delta}f_\pm dt \notag \\
&\lesssim & \left(\frac{\tau}{\beta}-\Delta\right)^{-1}f_\pm. \label{4.25}
\end{eqnarray}

Given any $q \in [1,2]$ we can apply
 Proposition \ref{proposition 1.01}, (\ref{4.25}), and via  the H\"older inequality to get
 \begin{eqnarray}
\left\|A(s)f\right\|_{L^{q}(\mathbb{R}^{2})}&\lesssim& \int_{0}^{\infty}\tau^{\frac{s}{2}}
\left\|\left(\tau-\Delta_V \right)^{-1}W
\left(\tau-\Delta_{V}\right)^{-1}f\right\|_{L^{q}(\mathbb{R}^{2})}d\tau \notag\\
&\lesssim &\left(\|f_+\|_{L^{q^{\prime}}(\mathbb{R}^{2})}+\|f_-\|_{L^{q^{\prime}}(\mathbb{R}^{2})}\right)\|W\|_{L^{\omega(q)}(\mathbb{R}^{2})} \notag \\
&& \times \int_{1}^{\infty}\tau^{\frac{s}{2}}
\left\|\left(\frac{\tau}{\beta}-\Delta\right)^{-1}\right\|_{L^{q}(\mathbb{R}^{2})\rightarrow
                               L^{q}(\mathbb{R}^{2}) } \notag \\
&&\qquad \times \left\|\left(\frac{\tau}{\beta}-\Delta\right)^{-1}\right\|_{L^{q^{\prime}}(\mathbb{R}
                               ^{2})\rightarrow L^{q^{\prime}}(\mathbb{R}^{2})}d\tau \notag \\
&&+ \left(\|f_+\|_{L^{q^{\prime}}(\mathbb{R}^{2})}+\|f_-\|_{L^{q^{\prime}}(\mathbb{R}^{2})}\right)\|<x>^{2a}W\|_{L^{\omega(q)}(\mathbb{R}^{2})} \notag \\
&& \times \int_{0}^{1}\tau^{\frac{s}{2}}
\left\|\left(\frac{\tau}{\beta}-\Delta\right)^{-1}<x>^{-a}\right\|_{L^{q}(\mathbb{R}^{2})\rightarrow
                               L^{q}(\mathbb{R}^{2}) } \notag \\
&&\qquad \times \left\|<x>^{-a}\left(\frac{\tau}{\beta}-\Delta\right)^{-1}\right\|_{L^{q^{\prime}}(\mathbb{R}
                               ^{2})\rightarrow L^{q^{\prime}}(\mathbb{R}^{2})}d\tau \notag,
                               \end{eqnarray}
where $ \omega=\omega(q)$ is determined by $\frac{1}{\omega}= \frac{2}{q}-1$ and $a = a(q) $ is an appropriate parameter to be chosen so that we can  apply  Lemma \ref{Lemma Resolvent Estimate 2 } (with $s_0=1$ in (\ref{ReEs2}), $s_1=3/4$ in (\ref{ReEs21}) and (\ref{ReEs22})), i.e.  we have to require
\begin{equation}\label{eq.pr1}
 \frac{1}{2} <  a(q) < 1 + \frac{\beta}2 - \frac{2}q,
\end{equation}
where $\beta>3$ is from our assumption $\mathrm{(H1)}$.
Then we can write
\begin{eqnarray}
\left\|A(s)f\right\|_{L^{q}(\mathbb{R}^{2})}
&\lesssim&  \|f\|_{L^{q^{\prime}}(\mathbb{R}^{2})}\|W\|_{L^{\omega}(\mathbb{R}^{2})}  \int_{1}^{\infty}\tau^{\frac{s}{2}-2}d\tau \notag \\
&&\qquad+   \|f\|_{L^{q^{\prime}}(\mathbb{R}^{2})}\|<x>^{2a}W\|_{L^{\omega}(\mathbb{R}^{2})}  \int_{0}^{1}\tau^{\frac{s}{2}-\frac{3}{2}}d\tau \notag \\
& \lesssim &  \|f\|_{L^{q^{\prime}}(\mathbb{R}^{2})}\|W\|_{L^{\omega}(\mathbb{R}^{2})}+ \|f\|_{L^{q^{\prime}}(\mathbb{R}^{2})}\|<x>^{2a}W\|_{L^{\omega}(\mathbb{R}^{2})}. \notag
\end{eqnarray}
Note that our choice \eqref{eq.pr1} guarantees that
we have
\begin{eqnarray}
&& \left\|<x>^{2a}W\right\|_{L^{\omega}(\mathbb{R}^{2})}
\lesssim\|V\|_{L^{\omega,2a}\left(\mathbb{R}^{2}\right)}+\left\|\sum_{j=1}^{2}x_{j}\frac{\partial V}{\partial x_{j}}\right\|_{L^{\omega, 2a}\left(\mathbb{R}^{2}\right)}\leq C.    \label{4.31}
\end{eqnarray}
So the assertion is proved.
\end{proof}
\begin{remark}
\label{remark A(s)}
By using the similar method as Lemma \ref{Lemma A(s)}, we also have
\begin{eqnarray}
\left\|A(s)f\right\|_{L^{q}(\mathbb{R}^{2})}\lesssim \left\|f\right\|_{L^{q}(\mathbb{R}^{2})} \label{add}
\end{eqnarray}
for  $1\leq q \leq \infty$ and $1<s<2$.
\end{remark}
\section {Equivalence of $\left(-\Delta_{V}\right)^{\frac{s}{2}}$ and $\left(-\Delta \right)^{\frac{s}{2}}$
in $L^{2}(\mathbb{R}^{2})$ norm sense}
\label{Equivalence}

To estimate $\left\| |J_{V}|^{s} (|u|^{p-1}u)\right\|_{ L^{2}(\mathbb{R}^{2})}$
which will be mentioned below,
we study the operator $\left(-\Delta_{V}\right)^{\frac{s}{2}}$ via
heat kernels of some Sch\"{o}dinger operators $-\Delta_{V}$ and $-\beta \Delta$
on $\mathbb{R}^{2}$,
where $\beta>0$.
By Lemma \ref{lemma 4.1}, we obtain the following lemma.
\begin{lemma}
\label{lemma 4.2}
 Assume that  the hypotheses $\rm {(H1) }$ and $\rm {(H2) }$ are satisfied. For $s\geq 0$,
we have
\begin{eqnarray}
\left\| \left(-\Delta\right)^{\frac{s}{2}} f\right\|_{L^{2}(\mathbb{R}^{2})} \lesssim
 \left\| \left(-\Delta_{V}\right)^{\frac{s}{2}}f\right\|_{L^{2}(\mathbb{R}^{2})}. \label{4.9}
\end{eqnarray}
\end{lemma}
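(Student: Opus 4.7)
The plan is to reduce the desired norm estimate to a form inequality for fractional powers, $\langle (-\Delta)^{s}f,f\rangle \leq C\langle (-\Delta_{V})^{s}f,f\rangle$, and then derive the latter from a heat-semigroup comparison by subordination. The starting observation is that $\mathrm{(H2)}$ immediately gives the quadratic-form bound $-\Delta \leq -\Delta_{V}$ on $H^{1}(\mathbb{R}^{2})$, since $\langle -\Delta_{V} u,u\rangle = \|\nabla u\|_{L^{2}}^{2} + \int_{\mathbb{R}^{2}} V|u|^{2}\,dx \geq \|\nabla u\|_{L^{2}}^{2}$. This already proves the estimate at the endpoints: $s=0$ is trivial and $s=1$ is exactly the preceding inequality. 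Only the fractional range $s \in (0,1)$ carries substantive content.

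For that range I would apply the subordination identity
\[
A^{s} = c_{s} \int_{0}^{\infty} t^{-s-1}(I - e^{-tA})\,dt, \qquad c_{s} = \frac{s}{\Gamma(1-s)},
\]
valid for any nonnegative self-adjoint operator $A$, to both $A=-\Delta$ and $A=-\Delta_{V}$. Subtracting the two identities and pairing with $f$ yields
\[
\langle (-\Delta_{V})^{s}f,f\rangle - \langle (-\Delta)^{s}f,f\rangle = c_{s} \int_{0}^{\infty} t^{-s-1}\langle (e^{t\Delta} - e^{t\Delta_{V}})f,f\rangle\,dt,
\]
so the problem collapses to showing $\langle e^{t\Delta_{V}}f,f\rangle \leq \langle e^{t\Delta}f,f\rangle$ for every $f \in L^{2}(\mathbb{R}^{2})$ and every $t>0$. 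The larger range $s \in [1,2)$ needed later can then be recovered by writing $s = 1+\sigma$ with $\sigma \in (0,1)$ and combining the $s=1$ identity with the fractional case already obtained.

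The heat-semigroup form inequality above is the main obstacle, and it is precisely where Lemma \ref{lemma 4.1} enters. The Feynman--Kac pointwise bound $0 \leq e^{t\Delta_{V}}(x,y) \leq e^{t\Delta}(x,y)$, of which Lemma \ref{lemma 4.1} is a refinement in the regime $V\geq 0$, does \emph{not} by itself imply the form inequality for complex or sign-changing $f$, because a non-negative kernel can pair with a signed function to produce either sign. The bridging ingredient is the operator-monotone-decreasing character of $x \mapsto e^{-tx}$ on positive self-adjoint operators, which promotes the form bound $-\Delta \leq -\Delta_{V}$ to the operator bound $e^{t\Delta_{V}} \leq e^{t\Delta}$ on the whole of $L^{2}(\mathbb{R}^{2})$. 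This operator-monotonicity step is what I expect to require the most care to justify rigorously; once it is in place, substituting it into the subordination identity closes the argument.
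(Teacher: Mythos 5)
Your reduction to the quadratic-form inequality and the subordination identity for $A^{s}$, $0<s<1$, are both correct, but the bridging step on which the whole argument rests is false: $x\mapsto e^{-tx}$ is \emph{not} operator monotone, so the form bound $-\Delta\leq-\Delta_{V}$ does \emph{not} promote to $e^{t\Delta_{V}}\leq e^{t\Delta}$ on $L^{2}$. This is not a technicality that ``requires care''; the principle fails already for $2\times2$ matrices of exactly the relevant type. Take $A=\bigl(\begin{smallmatrix}1&-0.9\\-0.9&1\end{smallmatrix}\bigr)\geq0$ (a positivity-preserving ``discrete Laplacian'') and $D=\mathrm{diag}(100,0)\geq0$ (a ``potential''); then $e^{-A}-e^{-(A+D)}$ has all entries nonnegative (consistent with kernel domination) but its determinant is negative, so it is not positive semidefinite and there are signed vectors $f$ with $\langle e^{-(A+D)}f,f\rangle>\langle e^{-A}f,f\rangle$. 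You correctly observed that the pointwise kernel bound $0\leq e^{t\Delta_{V}}(x,y)\leq e^{t\Delta}(x,y)$ does not yield the form inequality for sign-changing $f$; unfortunately the replacement you propose is a false general principle, so the gap cannot be closed along these lines. (For the record, in the range where your subordination argument would apply, i.e.\ form exponent $s\in[0,1]$, the conclusion follows directly from $-\Delta\leq-\Delta_{V}$ and the L\"owner--Heinz theorem, since $x\mapsto x^{s}$ \emph{is} operator monotone for $0\leq s\leq1$ --- no heat kernels needed.)

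The second gap is the range $s\in[1,2)$, which is the only range the paper actually uses (Lemma \ref{lemma 4.2} is invoked with $s=\alpha\in(1,\tfrac32)$). Writing $s=1+\sigma$ and ``combining'' the $s=1$ case with the fractional case does not work, because $(-\Delta)^{1/2}$ and $(-\Delta_{V})^{\sigma/2}$ do not commute, so the comparison does not factor; and L\"owner--Heinz is unavailable for exponents above $1$. The paper's proof sidesteps both problems by estimating the single operator $B_{s}=(-\Delta_{V})^{-s/2}(-\Delta)^{s/2}$ on $L^{2}$: it represents the \emph{negative} power as $(-\Delta_{V})^{-s/2}=\Gamma(s/2)^{-1}\int_{0}^{\infty}t^{s/2-1}e^{t\Delta_{V}}\,dt$, where every kernel in sight is nonnegative, so the pointwise domination of Lemma \ref{lemma 4.1} gives $(-\Delta_{V})^{-s/2}g\lesssim(-\Delta)^{-s/2}g$ for $g\geq0$, and a positive/negative-part decomposition then yields $\|B_{s}f\|_{L^{2}}\lesssim\|f\|_{L^{2}}$ for all $s>0$ at once. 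If you want to salvage your strategy, you should work with resolvents or heat semigroups applied to nonnegative functions (where kernel domination is the right tool), not with quadratic forms of the semigroup against arbitrary $f$.
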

\begin{proof}
Obviously (\ref{4.9}) holds in the case of $s=0$.
We focus our attention to the situation of $s>0$.
We show that
\begin{eqnarray}
\left\| \left(-\Delta_{V}\right)^{-\frac{s}{2}}
 \left(-\Delta\right)^{\frac{s}{2}}f\right\|_{L^{2}(\mathbb{R}^{2})} \lesssim
 \left\|  f \right\|_{L^{2}(\mathbb{R}^{2})} \label{04.10}
\end{eqnarray}
for $s>0$.\\

Using
\begin{eqnarray}
a^{-\frac{s}{2}}&=&\frac{1}{\Gamma (\frac{s}{2})}\int_{0}^{\infty}e^{-at}t^{\frac{s}{2}-1}dt\notag
\end{eqnarray}
for $s>0$,  we have
\begin{eqnarray}
\left(\left(-\Delta_{V}\right)^{-\frac{s}{2}}g\right)(x)=\frac{1}{\Gamma (\frac{s}{2})}\int_{0}^{\infty}e^{\Delta_{V}t}g(x)t^{\frac{s}{2}-1}dt.\label{04.12}
\end{eqnarray}
Since $\rm {(H1) }$ and $\rm {(H2) }$ say $-\Delta_{V}$ is a positive self-adjoint operator on $L^{2}(\mathbb{R}^{2})$,
then we have for every $t>0$, $e^{\Delta_{V}t}$ has a jointly continuous
integral kernel $e^{\Delta_{V}t}(x, y)$.
Thus we have
\begin{eqnarray}
e^{\Delta_{V}t}g(x)=\int_{\mathbb{R}^{2}}e^{\Delta_{V}t}(x, y) g(y)dy.\notag
\end{eqnarray}
By the estimate (\ref{4.13}) in Lemma \ref{lemma 4.1}, we have

\begin{eqnarray}
0\leq e^{\Delta_{V}t}g(x)&\lesssim& \int_{\mathbb{R}^{2}}
e^{\beta t \Delta}(x,y)g(y)dy \notag \\
&=&e^{\beta t \Delta}g(x) \label{add4.7}
\end{eqnarray}
for $g\geq0$, where $e^{\beta t \Delta}(x,y)$ is the heat kernel of the Sch\"{o}dinger operator
$-\beta\Delta, \beta>0$.
Then we have from (\ref{04.12}) and (\ref{add4.7})
\begin{eqnarray}
\left(-\Delta_{V}\right)^{-\frac{s}{2}}g(x)
\lesssim \left(-\Delta \right)^{-\frac{s}{2}}g(x)\notag
\end{eqnarray}
for $s>0$ and $g\geq0$.
Thus we have
\begin{eqnarray}
\left(-\Delta_{V}\right)^{-\frac{s}{2}}\left(-\Delta \right)^{\frac{s}{2}}f(x)
\lesssim f(x)\notag
\end{eqnarray}
for $s>0$ and $f\geq0$, where $f=\left(-\Delta\right)^{-\frac{s}{2}}g$ .
Let $B_s=\left(-\Delta_{V}\right)^{-\frac{s}{2}}\left(-\Delta \right)^{\frac{s}{2}}$.
Decomposing $f = f_+-f_-, g=g_+-g_-$  we can deduce
\begin{eqnarray*}
|<B_sf, g>|&=&|<B_s(f_{+}-f_{-}), g_{+}-g_{-}>|\\
&\leq&|<B_sf_{+}, g_{+}>|+|<B_sf_{+}, g_{-}>|\\
\qquad &+&|<B_sf_{-}, g_{+}>|+|<B_sf_{-}, g_{-}>|\\
&\lesssim& \|f\|_{L^{2}(\mathbb{R}^2)}\|g\|_{L^{2}(\mathbb{R}^2)}
\end{eqnarray*}
for $s>0$, without requiring $f \geq 0, g \geq 0.$ The inequality
\begin{equation*}
    \|B_sf\|_{L^2} \lesssim\|f\|_{L^{2}(\mathbb{R}^2)}
\end{equation*}
holds for $s>0$.
Therefore we have the estimate (\ref{04.10}).\\
\end{proof}
\begin{remark}
It is difficult to obtain Gaussian estimates for heat kernel of the Sch\"{o}dinger operator
$-\Delta+V(x)$, especially for $V \leq 0$. There are some sharp Gaussian estimates
 for heat kernel of the Sch\"{o}dinger operator $-\Delta+V(x)$ with $V \geq 0 $
(see e.g. \cite{AM1998}, \cite{BDS2016} and \cite{Zhang2001}).
Especially, the sharp Gaussian Estimates for heat kernel of the Sch\"{o}dinger operator
$-\Delta+V(x)$ with nontrivial $V\geq 0$ in $\mathbb{R}^{2}$ or
 $\mathbb{R}^{1}$ fail (see \cite{BDS2016} ).
 \end{remark}


\begin{lemma}
\label{Lemma Jv0}
Let $V(x)$ satisfy $(\rm{H1})$ and $(\rm{H2})$. For any $0<s<2$ and for any $ q> 2$, we have
\begin{eqnarray} \label{n.m0}
\left\|\left(-\Delta_{V}\right)^{\frac{s}{2}}f-\left(-\Delta \right)^{\frac{s}{2}}f\right\|_{L^{2}(\mathbb{R}^{2})}\lesssim
\|f\|_{L^{q}(\mathbb{R}^{2})}.
\end{eqnarray}
\end{lemma}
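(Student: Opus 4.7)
The strategy is to express the difference of fractional powers as a spectral integral of resolvents and then estimate each piece via Lemma \ref{Lemma Resolvent Estimate 2 } together with the heat-kernel domination of Lemma \ref{lemma 4.1}.

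I would begin from the Balakrishnan integral representation
$$ H^{s/2} = \frac{\sin(\pi s/2)}{\pi}\int_0^\infty \lambda^{s/2-1}\,\frac{H}{\lambda + H}\,d\lambda, \qquad 0<s<2,$$
valid for any non-negative self-adjoint operator $H$ on $L^2$. Writing $H/(\lambda+H) = I - \lambda(\lambda+H)^{-1}$ and applying the formula to both $H = -\Delta_V$ and $H = -\Delta$, the identity operators cancel, and combining with the resolvent identity $(\lambda-\Delta)^{-1} - (\lambda - \Delta_V)^{-1} = -(\lambda - \Delta)^{-1} V (\lambda - \Delta_V)^{-1}$ yields
$$ (-\Delta_V)^{s/2} f - (-\Delta)^{s/2} f = -c(s) \int_0^\infty \lambda^{s/2}\, (\lambda - \Delta)^{-1} V (\lambda - \Delta_V)^{-1} f\,d\lambda. $$
Taking the $L^2$ norm under the integral reduces the task to bounding the operator norm of the integrand from $L^q$ to $L^2$ in a manner integrable against $\lambda^{s/2}\,d\lambda$.

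I would split the spectral integral at $\lambda = 1$. In the regime $\lambda \geq 1$, I apply Lemma \ref{Lemma Resolvent Estimate 2 } with $s_0=1$ (no weights): this gives $\|(\lambda - \Delta)^{-1}\|_{L^2 \to L^2} \lesssim \lambda^{-1}$, and via the pointwise heat-kernel bound \eqref{4.25} of Lemma \ref{lemma 4.1} also $\|(\lambda - \Delta_V)^{-1}\|_{L^q \to L^q} \lesssim \lambda^{-1}$ after decomposing into positive and negative parts. H\"older's inequality against $V \in L^{2q/(q-2)}$, which follows from $|V(x)| \lesssim \langle x\rangle^{-\beta}$ with $\beta>3$, then produces an integrand of size $\lambda^{s/2-2}\|f\|_{L^q}$, integrable on $[1,\infty)$ precisely because $s<2$.

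The delicate part is the regime $\lambda \leq 1$, where the unweighted resolvents behave like $\lambda^{-1}$ and overwhelm the $\lambda^{s/2}$ weight. Here I would use the weighted estimates \eqref{ReEs21} and \eqref{ReEs22} with $s_0,s_1 < 1$, inserting spatial weights $\langle x\rangle^{-a_0}$ and $\langle x\rangle^{-a_1}$ on either side of $V$. Specifically, choose $s_0=s_1=\tfrac12+\tfrac{s}{8}$ so that $s_0+s_1 = 1+\tfrac{s}{4} < 1+\tfrac{s}{2}$, and $a_0 = a_1$ slightly larger than $2(1-s_0) = 1-\tfrac{s}{4}$. The chain becomes
$$\|(\lambda - \Delta)^{-1} V (\lambda - \Delta_V)^{-1} f\|_{L^2} \lesssim \lambda^{-s_0}\,\|\langle x\rangle^{a_0+a_1} V\|_{L^{2q/(q-2)}}\,\|\langle x\rangle^{-a_1}(\lambda - \Delta_V)^{-1} f\|_{L^q},$$
and the last factor is bounded by $\lambda^{-s_1}\|f\|_{L^q}$ after again dominating $(\lambda-\Delta_V)^{-1}$ by the free resolvent $(\lambda/\beta-\Delta)^{-1}$. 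The product contributes $\lambda^{-(1+s/4)}$, and against $\lambda^{s/2}$ leaves $\lambda^{s/4-1}$, integrable near $0$ for every $s>0$.

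The main obstacle is the compatibility of the required weights with the decay of $V$: the lower bound $a_0+a_1 > 2(2-s_0-s_1)$ forces $\langle x\rangle^{a_0+a_1} V \lesssim \langle x\rangle^{a_0+a_1-\beta}$ to lie in $L^{2q/(q-2)}(\mathbb{R}^2)$, which demands $a_0+a_1 < \beta - (q-2)/q$. Since $\beta > 3$ gives $\beta-(q-2)/q > 2$ for every $q>2$, while the lower bound on $a_0+a_1$ is strictly below $2$, there is always room to fit the weight in, which is precisely what makes the stated range $q>2$ admissible.
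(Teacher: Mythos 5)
Your argument is correct and follows essentially the same route as the paper: the same Balakrishnan-type representation and resolvent identity reduce the difference to $c(s)\int_0^\infty \tau^{s/2}(\tau-\Delta_V)^{-1}V(\tau-\Delta)^{-1}f\,d\tau$ (the paper puts the perturbed resolvent on the left, and with the convention $\Delta_V=\Delta-V$ the correct sign is $+$, but neither point affects the norm bound), followed by the same splitting at $\tau=1$ with unweighted free resolvent bounds for $\tau\geq 1$ and weighted ones combined with the heat-kernel domination of Lemma \ref{lemma 4.1} for $\tau\leq 1$. Your choice $s_0=s_1=\tfrac12+\tfrac{s}{8}$ plays exactly the role of the paper's $s_0=\tfrac12+\tfrac{s}{4}-\tfrac{\varepsilon}{4}$, and your verification that the weight window $\bigl(2-\tfrac{s}{2},\,\beta-\tfrac{q-2}{q}\bigr)$ is nonempty matches the paper's condition $r(3-2a)>2$.
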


\begin{proof}
Since  $(\tau+a)^{-1}=\int_{0}^{\infty}e^{-(a+\tau)t}dt,$ we have
\begin{eqnarray*}
\left(\tau-\Delta_{V}\right)^{-1}g(x)=\int_{0}^{\infty}e^{-\tau t}e^{\Delta_{V}t}g(x) dt.
\end{eqnarray*}
Let $e^{t\Delta_V}(x,y)$ be the heat kernel of the Sch\"{o}dinger operator
$-\Delta_{V}$. Then we have
\begin{eqnarray*}
e^{\Delta_{V}t}g(x)=\int_{\mathbb{R}^{2}}e^{t\Delta_V}(x,y)g(y)dy.
\end{eqnarray*}

By Lemma \ref{lemma 4.1}, we  have positive $\beta$ such that
\begin{eqnarray}
\label{a1}
\left(\tau-\Delta_{V}\right)^{-1}g(x)&\lesssim&\int_{0}^{\infty}e^{-\tau t}
e^{\beta \Delta t}g(x) dt \notag \\
&\lesssim&\left(\frac{\tau}{\beta}-\Delta\right)^{-1}g(x)
\end{eqnarray}
for $g\geq 0$.
Then we have
\begin{eqnarray}
\|\left(\tau-\Delta_{V}\right)^{-1}g\|_{L^2(\mathbb{R}^{2})}\lesssim\left\|\left(\frac{\tau}{\beta}-\Delta\right)^{-1}g\right\|_{L^2(\mathbb{R}^{2})}  \label{a1}
\end{eqnarray}
without requiring $g\geq 0$.

Now we can use the relation
\begin{eqnarray}
\left(-\Delta_{V}\right)^{\frac{s}{2}}f=c(s)(-\Delta_{V})\int_{0}^{\infty}\tau^{\frac{s}{2}-1}
\left(\tau-\Delta_{V}\right)^{-1}fd\tau \notag
\end{eqnarray}
 with
\begin{eqnarray}
c(s)^{-1}=\int_{0}^{\infty}\tau^{\frac{s}{2}-1}(\tau+1)^{-1}d\tau  \notag
\end{eqnarray}
for $0<s<2$. Therefore,
we have
\begin{eqnarray}
\left(-\Delta_{V}\right)^{\frac{s}{2}}f&=&c(s)(-\Delta_{V})\int_{0}^{\infty}\tau^{\frac{s}{2}-1}
\left(\tau-\Delta_{V}\right)^{-1}fd\tau \notag\\
&=&c(s)(-\Delta_{V})\int_{0}^{\infty}\tau^{\frac{s}{2}-1}\left[\left(
\tau-\Delta_{V}\right)^{-1}-\left(\tau-\Delta\right)^{-1}\right]fd\tau  \notag \\
&&\qquad+c(s)(-\Delta_{V}) \int_{0}^{\infty}\tau^{\frac{s}{2}-1}
\left(\tau-\Delta\right)^{-1} f d\tau.
\end{eqnarray}
Using the relations
\begin{eqnarray*}
&&\qquad(-\Delta_{V})\left[\left(\tau-\Delta_{V}\right)^{-1}-\left(\tau-\Delta\right)^{-1}\right] \\
&&= -(-\Delta_{V})\left(\tau-\Delta_{V}\right)^{-1}V\left(\tau-\Delta\right)^{-1} \\
&&=-\left(\tau-\Delta_{V}\right) \left(\tau-\Delta_{V}\right)^{-1}V\left(\tau-\Delta\right)^{-1} + \tau \left(\tau-\Delta_{V}\right)^{-1}V\left(\tau-\Delta\right)^{-1}\\
 &&= -V\left(\tau-\Delta\right)^{-1} + \tau \left(\tau-\Delta_{V}\right)^{-1}V\left(\tau-\Delta\right)^{-1},
 \end{eqnarray*}
we find
 \begin{eqnarray*}
\left(-\Delta_{V}\right)^{\frac{s}{2}}f &= &-c(s)\int_{0}^{\infty}\tau^{\frac{s}{2}-1}   V\left(\tau-\Delta\right)^{-1} fd\tau + c(s)\int_{0}^{\infty}\tau^{\frac{s}{2}}\left(\tau-\Delta_{V}\right)^{-1}
V\left(\tau-\Delta\right)^{-1} fd\tau  \notag \\
&&\qquad-c(s)\Delta\int_{0}^{\infty}\tau^{\frac{s}{2}-1}
\left(\tau-\Delta\right)^{-1} fd\tau+c(s) \int_{0}^{\infty}\tau^{\frac{s}{2}-1}
V\left(\tau-\Delta\right)^{-1} f d\tau   \notag\\
&=&\left(-\Delta\right)^{\frac{s}{2}}f+c(s)\int_{0}^{\infty}\tau^{\frac{s}{2}}\left(
\tau-\Delta_{V}\right)^{-1}V\left(\tau-\Delta\right)^{-1}fd\tau. \notag
\end{eqnarray*}
Therefore we obtain
\begin{eqnarray}
\left(-\Delta_{V}\right)^{\frac{s}{2}}f
&=&\left(-\Delta\right)^{\frac{s}{2}}f  + c(s)\int_{0}^{\infty}\tau^{\frac{s}{2}}\left(
\tau-\Delta_{V}\right)^{-1}V\left(\tau-\Delta\right)^{-1}fd\tau  \label{4.180}
\end{eqnarray}
for $0<s<2$.

By (\ref{a1}) and the  H\"{o}lder inequality with $1/q+1/r=1/2$ , we have
\begin{eqnarray}
& \quad \quad
 \int_{0}^{\infty} \left\|\tau^{\frac{s}{2}}\left(
\tau-\Delta_{V}\right)^{-1}V\left(\tau-\Delta\right)^{-1}f\right\|_{L^{2}(\mathbb{R}^{2})}d\tau   \label{estimate} \\
&\lesssim   \|f\|_{L^{q}(\mathbb{R}^{2})}\|V\|_{L^{r}(\mathbb{R}^{2})} \int_{1}^{\infty}\tau^{\frac{s}{2}}
\left\|\left(\frac{\tau}{\beta}-\Delta\right)^{-1}\right\|_{L^{2}(\mathbb{R}^{2})\rightarrow
                               L^{2}(\mathbb{R}^{2}) } \left\|\left(\tau-\Delta\right)^{-1}\right\|_{L^{q}(\mathbb{R}
                               ^{2})\rightarrow L^{q}(\mathbb{R}^{2})}d\tau \notag \\
&+  \|f\|_{L^{q}(\mathbb{R}^{2})}\|<x>^{2a}V\|_{L^{r}(\mathbb{R}^{2})} \notag \\
& \times \int_{0}^{1}\tau^{\frac{s}{2}}
\left\|\left(\frac{\tau}{\beta}-\Delta\right)^{-1}<x>^{-a}\right\|_{L^{2}(\mathbb{R}^{2})\rightarrow
                               L^{2}(\mathbb{R}^{2}) } \left\|<x>^{-a}\left(\tau-\Delta\right)^{-1}\right\|_{L^{q}(\mathbb{R}
                               ^{2})\rightarrow L^{q}(\mathbb{R}^{2})}d\tau, \notag
                               \end{eqnarray}
here $ a$ is chosen so that $ a > 1-s/2.$ Now taking
$$ a= 1-\frac{s}{2} + \varepsilon, \ \   s_0 = \frac{1}2 + \frac{s}4 - \frac{\varepsilon}4, $$
where $0<\varepsilon<\frac{s}{2} $.
we can apply Lemma \ref{Lemma Resolvent Estimate 2 }, since
$$ \frac{a}2 + s_0 = 1 + \varepsilon \left( \frac{1}2 - \frac{1}4 \right) > 1 $$
and get
\begin{eqnarray}
& &\int_{0}^{\infty} \left\|\tau^{\frac{s}{2}}\left(
\tau-\Delta_{V}\right)^{-1}V\left(\tau-\Delta\right)^{-1}f\right\|_{L^{2}(\mathbb{R}^{2})}d\tau \\
&\lesssim & \|f\|_{L^{q}(\mathbb{R}^{2})}\|V\|_{L^{r}(\mathbb{R}^{2})} \int_{1}^{\infty}\tau^{\frac{s}{2}-2}d\tau + \|f\|_{L^{q}(\mathbb{R}^{2})}\|<x>^{2a}V\|_{L^{r}(\mathbb{R}^{2})} \int_{0}^{1}\tau^{\frac{s}{2}-2 s_0}d\tau \notag \\
&\lesssim & \|f\|_{L^{q}(\mathbb{R}^{2})}, \notag
\end{eqnarray}
since
$$ \frac{s}2 - 2 s_0 = -1 + \frac{\varepsilon}2 > -1$$
and
$$ r(3-2a)=r(1+s-2\varepsilon)>2.$$
\end{proof}
\begin{lemma}
\label{Lemma Jv}
Let $V(x)$ satisfy $(\rm{H1})$ and $(\rm{H2})$. Then we have the estimates
\begin{description}
  \item[a)] for any $0 \leq  s < 1$ we have
  \begin{equation}\label{eq.n.m11}
    \left\|\left(-\Delta_{V}\right)^{\frac{s}{2}}f\right\|_{L^{2}(\mathbb{R}^{2})}  \lesssim  \left\|\left(-\Delta \right)^{\frac{s}{2}}f\right\|_{L^{2}(\mathbb{R}^{2})};
  \end{equation}
  \item[b)] for any $ 1 \leq s <2,$ and $0 < \sigma < 1,$  we have
  \begin{eqnarray} \label{n.m0}
\left\|\left(-\Delta_{V}\right)^{\frac{s}{2}}f-\left(-\Delta \right)^{\frac{s}{2}}f\right\|_{L^{2}(\mathbb{R}^{2})}\lesssim
\|(-\Delta )^{\frac{s}{2}}f\|_{L^{2}(\mathbb{R}^{2})}^{\frac{\sigma}{s}}\|f\|_{L^{2}(\mathbb{R}^{2})}^{1-\frac{\sigma}{s}}.
\end{eqnarray}
\end{description}
\end{lemma}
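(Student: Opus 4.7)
The plan is to deduce both parts of Lemma \ref{Lemma Jv} from Lemma \ref{Lemma Jv0} by combining it with the 2D Sobolev embedding and (for part (b)) an elementary interpolation in the homogeneous Sobolev scale. No additional potential-specific analysis should be required beyond what Lemma \ref{Lemma Jv0} already encodes.

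For part (a), fix $0 \le s < 1$ (the case $s=0$ being trivial) and set $q = \tfrac{2}{1-s} > 2$. The 2D homogeneous Sobolev embedding $\dot{H}^{s,0}(\mathbb{R}^2) \hookrightarrow L^q(\mathbb{R}^2)$ is valid precisely because $s < 1 = n/2$, giving $\|f\|_{L^q} \lesssim \|(-\Delta)^{s/2} f\|_{L^2}$. Applying Lemma \ref{Lemma Jv0} with this $q$ and then using the triangle inequality yields
\begin{eqnarray*}
\left\|(-\Delta_V)^{s/2} f\right\|_{L^2} &\le& \left\|(-\Delta)^{s/2} f\right\|_{L^2} + \left\|(-\Delta_V)^{s/2} f - (-\Delta)^{s/2} f\right\|_{L^2} \\
&\lesssim& \left\|(-\Delta)^{s/2} f\right\|_{L^2} + \|f\|_{L^q} \\
&\lesssim& \left\|(-\Delta)^{s/2} f\right\|_{L^2},
\end{eqnarray*}
which is \eqref{eq.n.m11}.

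For part (b), fix $1 \le s < 2$ and $0 < \sigma < 1$, and choose $q = \tfrac{2}{1-\sigma} > 2$. The Sobolev embedding used above (with $\sigma$ in place of $s$) gives $\|f\|_{L^q} \lesssim \|(-\Delta)^{\sigma/2} f\|_{L^2}$, and since $0 < \sigma < 1 \le s$ the standard interpolation
\begin{equation*}
\left\|(-\Delta)^{\sigma/2} f\right\|_{L^2} \le \left\|(-\Delta)^{s/2} f\right\|_{L^2}^{\sigma/s} \|f\|_{L^2}^{1-\sigma/s}
\end{equation*}
follows from Plancherel and H\"older in frequency applied to $|\xi|^{\sigma} = |\xi|^{s \cdot \sigma/s}$. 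Combining these with Lemma \ref{Lemma Jv0} for the chosen $q$ delivers \eqref{n.m0}.

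The only delicate point is the compatibility of the exponents: we need $q > 2$ for Lemma \ref{Lemma Jv0}, while simultaneously $q$ must lie in the range of the 2D Sobolev embedding for a derivative of order strictly less than $1$. This forces the use of $s$ itself in part (a), which is why the conclusion there is restricted to $s < 1$; and it forces the introduction of the auxiliary parameter $\sigma \in (0,1)$ in part (b), after which the interpolation identity $\sigma/s + (1-\sigma/s) = 1$ produces the precise exponents displayed in \eqref{n.m0}. I do not anticipate any genuine obstacle beyond bookkeeping of these exponents, since all the analytic content (the resolvent/heat-kernel argument) has already been absorbed into Lemma \ref{Lemma Jv0}.
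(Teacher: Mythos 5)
Your proof is correct and follows essentially the same route as the paper: both parts are deduced from Lemma \ref{Lemma Jv0} via the 2D homogeneous Sobolev embedding $\|f\|_{L^{q}(\mathbb{R}^{2})}\lesssim\|(-\Delta)^{\frac{s}{2}}f\|_{L^{2}(\mathbb{R}^{2})}$ with $\frac{s}{2}=\frac12-\frac1q$ (taking the order $s$ itself in part (a) and the auxiliary order $\sigma$ in part (b)), together with the standard interpolation $\|(-\Delta)^{\frac{\sigma}{2}}f\|_{L^{2}}\lesssim\|(-\Delta)^{\frac{s}{2}}f\|_{L^{2}}^{\sigma/s}\|f\|_{L^{2}}^{1-\sigma/s}$. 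Your handling of the exponents and of the triangle inequality in part (a) matches the paper's argument exactly.
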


\begin{proof}
We have (\ref{eq.n.m11}) by Lemma \ref{Lemma Jv0}
and the Sobolev embedding
\begin{eqnarray} \label{add1}
 \|f\|_{L^{q}(\mathbb{R}^{2})}  \lesssim \|(-\Delta)^{\frac{s}{2}}f\|_{L^2(\mathbb{R}^2)},
 \end{eqnarray}
where $\frac{s}{2}= \frac{1}{2}-\frac{1}{q}$ and $q>2$.\\

Applying  the Sobolev embedding (\ref{add1}) with $\sigma=s,$
where $\frac{\sigma}{2}=\frac{1}{2}-\frac{1}{q}$ and $q>2,$
and the interpolation inequality
$$ \|(-\Delta)^{\frac{\sigma}{2}}f\|_{L^2(\mathbb{R}^2)} \lesssim \|(-\Delta)^{\frac{s}{2}}f\|_{L^2(\mathbb{R}^2)}^\theta \|f\|^{1-\theta}_{L^2(\mathbb{R}^2)},$$
with
$\theta=\frac{\sigma }{s}$, we get \eqref{n.m0} from Lemma \ref{Lemma Jv0}.

\end{proof}
\begin{remark}
For any $1<s<2$, we have
 \begin{eqnarray} \label{n.m1}
\left\|\left(-\Delta_{V}\right)^{\frac{s}{2}}f-\left(-\Delta \right)^{\frac{s}{2}}f\right\|_{L^{2}(\mathbb{R}^{2})}\lesssim
\|(-\Delta_V)^{\frac{s}{2}}f\|_{L^{2}(\mathbb{R}^{2})}^{\frac{\sigma}{s}}\|f\|_{L^{2}(\mathbb{R}^{2})}^{1-\frac{\sigma}{s}},
\end{eqnarray}
due to \eqref{n.m0} and Lemma \ref{lemma 4.2}.
\end{remark}
By Lemma \ref{lemma 4.2} and (\ref{eq.n.m11}) in Lemma \ref{Lemma Jv},
we have the following equivalence  property directly.
 \begin{lemma}
 \label{equi lemma}
Suppose that $\rm{(H1)}$ and $\rm{(H2)}$ are satisfied.
For any $0\leq s<1$, we have
\begin{eqnarray} \label{equi}
\left\|\left(-\Delta_{V}\right)^{\frac{s}{2}}f\right\|_{L^{2}(\mathbb{R}^{2})}\sim
\|(-\Delta )^{\frac{s}{2}}f\|_{L^{2}(\mathbb{R}^{2})}.
\end{eqnarray}
\end{lemma}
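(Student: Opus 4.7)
The statement is symmetric, so my plan is to establish the two one-sided bounds separately and combine them; no new analysis is required beyond what has already been done in Section~\ref{Equivalence}.

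For the direction $\|(-\Delta)^{s/2} f\|_{L^2(\mathbb{R}^2)} \lesssim \|(-\Delta_V)^{s/2} f\|_{L^2(\mathbb{R}^2)}$, I would simply invoke Lemma~\ref{lemma 4.2}, which already delivers this inequality for every $s\geq 0$ and in particular on the range $0\leq s<1$. Recall that this lemma in turn rests on the heat-kernel domination $0\leq e^{t\Delta_V}(x,y)\lesssim e^{\beta t\Delta}(x,y)$ from Lemma~\ref{lemma 4.1}, which is where assumption (H2) (i.e.\ $V\geq 0$) enters, combined with the subordination representation
\begin{equation*}
    \bigl((-\Delta_V)^{-s/2} g\bigr)(x) = \frac{1}{\Gamma(s/2)}\int_0^\infty e^{t\Delta_V} g(x)\,t^{s/2-1}\,dt
\end{equation*}
and a $\pm$-decomposition $g=g_+-g_-$ to dispense with positivity of the test function.

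For the reverse direction $\|(-\Delta_V)^{s/2} f\|_{L^2(\mathbb{R}^2)} \lesssim \|(-\Delta)^{s/2} f\|_{L^2(\mathbb{R}^2)}$, I would apply estimate \eqref{eq.n.m11} of Lemma~\ref{Lemma Jv}(a), which is literally this bound on the range $0\leq s<1$. It was obtained by feeding the difference estimate of Lemma~\ref{Lemma Jv0} into the two-dimensional Sobolev embedding $\|f\|_{L^q(\mathbb{R}^2)}\lesssim \|(-\Delta)^{s/2} f\|_{L^2(\mathbb{R}^2)}$ at the scaling $\frac{s}{2}=\frac{1}{2}-\frac{1}{q}$. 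The constraint $2<q<\infty$ translates exactly into $0<s<1$; together with the trivial case $s=0$, this fixes the range in the statement.

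Chaining the two inequalities produces \eqref{equi}. There is essentially no genuine obstacle remaining at this stage: the nontrivial analytic input (the resolvent/heat-kernel bookkeeping in Lemmas~\ref{Lemma Jv0} and~\ref{Lemma Jv}) has already been carried out, and the sharpness of the range $0\leq s<1$ traces back to the Sobolev embedding invoked in Lemma~\ref{Lemma Jv}(a); beyond $s=1$ one is forced to move to the interpolated difference estimate \eqref{n.m0} of Lemma~\ref{Lemma Jv}(b), which is why an equivalence of the form \eqref{equi} cannot be stated as cleanly in that regime.
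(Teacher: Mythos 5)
Your proposal is correct and matches the paper's own proof exactly: the paper likewise obtains \eqref{equi} by combining the lower bound of Lemma~\ref{lemma 4.2} with the upper bound \eqref{eq.n.m11} of Lemma~\ref{Lemma Jv}(a), and your account of where the restriction $0\leq s<1$ originates (the Sobolev embedding with $\frac{s}{2}=\frac{1}{2}-\frac{1}{q}$, $q>2$) is accurate.
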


To estimate $\left\|(-\Delta_{V})^{\frac{s}{2}}M(t)\left(|u|^{p-1}u\right)\right\|_{ L^{2}(\mathbb{R}^{2})}$ with $1<s<2$,
we need the estimate about $\left\|u\right\|_{L^{\infty}(\mathbb{R}^{2})}$.
We consider the following lemma.
\begin{lemma}
\label{Lemma 5.10}
Suppose that $\rm{(H1)}$ and $\rm{(H2)}$ are satisfied.
 Then
for any $1<s<2$, we have
\begin{eqnarray}  \label{n.m10}
\left\|f\right\|_{L^{\infty}(\mathbb{R}^{2})} \lesssim \|(-\Delta_V)^{\frac{s}{2}}f\|^{\frac{1}{s}}_{L^{2}(\mathbb{R}^{2})}
                                                     \left\|f\right\|^{1-\frac{1}{s}}_{L^{2}(\mathbb{R}^{2})}
\end{eqnarray}
\end{lemma}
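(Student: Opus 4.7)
The plan is to reduce the statement to the classical Gagliardo--Nirenberg inequality for the free Laplacian and then exchange $(-\Delta)^{s/2}$ for $(-\Delta_{V})^{s/2}$ using the one-sided comparison already established in Lemma \ref{lemma 4.2}.

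First, I would recall the classical interpolation inequality in two space dimensions: for any $s>1$,
\begin{equation*}
\|f\|_{L^{\infty}(\mathbb{R}^{2})}\lesssim \|(-\Delta)^{\frac{s}{2}}f\|_{L^{2}(\mathbb{R}^{2})}^{\frac{1}{s}}\,\|f\|_{L^{2}(\mathbb{R}^{2})}^{1-\frac{1}{s}}.
\end{equation*}
The exponent $1/s$ is forced by the scaling $f\mapsto f(\lambda\cdot)$: indeed, the right-hand side scales as $\lambda^{-1+\theta s}$ where $\theta$ is the exponent of $\|(-\Delta)^{s/2}f\|_{L^{2}}$, so scale invariance requires $\theta s =1$, and the constraint $\theta\le 1$ forces $s\ge 1$ (with $s>1$ needed to avoid the endpoint failure $\dot H^{1}(\mathbb{R}^{2})\not\hookrightarrow L^{\infty}$). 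This is a standard Gagliardo--Nirenberg estimate.

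Second, I would invoke Lemma \ref{lemma 4.2}, which, under $(\mathrm{H1})$ and $(\mathrm{H2})$, gives
\begin{equation*}
\|(-\Delta)^{\frac{s}{2}}f\|_{L^{2}(\mathbb{R}^{2})} \lesssim \|(-\Delta_{V})^{\frac{s}{2}}f\|_{L^{2}(\mathbb{R}^{2})}
\end{equation*}
for every $s\ge 0$. Note that this inequality rests on the pointwise heat-kernel domination $0\le e^{t\Delta_{V}}(x,y)\lesssim e^{\beta t\Delta}(x,y)$ of Lemma \ref{lemma 4.1}, so it is available without any additional hypothesis in the range $1<s<2$ that interests us here.

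Substituting the second inequality into the first immediately produces
\begin{equation*}
\|f\|_{L^{\infty}(\mathbb{R}^{2})}\lesssim \|(-\Delta_{V})^{\frac{s}{2}}f\|_{L^{2}(\mathbb{R}^{2})}^{\frac{1}{s}}\,\|f\|_{L^{2}(\mathbb{R}^{2})}^{1-\frac{1}{s}},
\end{equation*}
which is \eqref{n.m10}. There is no real obstacle: the two essential ingredients, namely the classical 2D Gagliardo--Nirenberg embedding (valid because $s>1$) and the heat-kernel comparison of Lemma \ref{lemma 4.2}, do all the work; the only mild subtlety is remembering that one can only afford the \emph{one-sided} comparison in the direction $(-\Delta)^{s/2}\lesssim(-\Delta_{V})^{s/2}$ here, since the reverse estimate is only established for $0\le s<1$ in Lemma \ref{equi lemma}.
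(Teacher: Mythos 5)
Your proposal is correct and takes essentially the same route as the paper: the paper simply proves the free-Laplacian inequality $\|f\|_{L^{\infty}(\mathbb{R}^{2})}\lesssim\|(-\Delta)^{\frac{s}{2}}f\|_{L^{2}(\mathbb{R}^{2})}^{\frac{1}{s}}\|f\|_{L^{2}(\mathbb{R}^{2})}^{1-\frac{1}{s}}$ explicitly (via $\|f\|_{L^{\infty}}\le\|\mathcal{F}f\|_{L^{1}}$, splitting the frequency integral at $|\xi|=\tau$, and optimizing in $\tau$), whereas you quote it as a standard Gagliardo--Nirenberg estimate. The second step, invoking the one-sided comparison of Lemma \ref{lemma 4.2}, is identical, and your observation that only this direction of the comparison is needed (and available for $1<s<2$) matches the paper's argument.
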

\begin{proof}
By the H\"{o}lder  inequality, we have
\begin{eqnarray}
 \|\mathcal{F}f\|_{L^{1}(\mathbb{R}^{2})}& \lesssim&\tau \|\mathcal{F}f\|_{L^{2}(|\xi|\leq \tau)}  \notag \\
&&\quad + \||\xi|^{s}\mathcal{F}f\|_{L^{2}(|\xi|\geq \tau)}\||\xi|^{-s}\|_{L^{2}(|\xi|\geq \tau)} \label{FV1} \\
& \lesssim&  \tau \|f\|_{L^{2}(\mathbb{R}^{2})}+\sqrt{\frac{1}{2(s-1)}} \tau^{1-s}\left\|(-\Delta)^{\frac{s}{2}}f\right\|_{L^{2}(\mathbb{R}^{2})} \notag
\end{eqnarray}
for any $s>1$ and $\tau >0$.
Let $\tau=\left( \sqrt{\frac{1}{2(s-1)}} \|(-\Delta)^{\frac{s}{2}}f\|_{L^{2}(\mathbb{R}^{2})}
 \right)^{\frac{1}{s}}\|f\|_{L^{2}(\mathbb{R}^{2})}^{-\frac{1}{s}}$.
Then we have
\begin{eqnarray}
 \tau \|f\|_{L^{2}(\mathbb{R}^{2})}&=&\sqrt{\frac{1}{2(s-1)}}\tau^{1-s}\left\|(-\Delta)^{\frac{s}{2}}f\right\|_{L^{2}(\mathbb{R}^{2})} \label{FV2}\\
&=& \left(\frac{1}{2(s-1)}\right)^{\frac{1}{2s}}\|(-\Delta)^{\frac{s}{2}}f\|^{\frac{1}{s}}_{L^{2}(\mathbb{R}^{2})}
                                                     \left\|f\right\|^{1-\frac{1}{s}}_{L^{2}(\mathbb{R}^{2})} . \notag
\end{eqnarray}
By (\ref{FV1}), (\ref{FV2}) and Lemma \ref{lemma 4.2}, we have our desired result.
\end{proof}
\begin{remark}
\label{Remark 6.1}
By Lemma \ref{Lemma 5.10} and $|J_{V}|^{s}(t)=M(-t)(-t^{2}\Delta_{V})^{\frac{s}{2}}M(t)$,  we have
\begin{eqnarray}
\left\|f(t,\cdot)\right\|_{L^{\infty}(\mathbb{R}^{2})} & \lesssim& \|(-\Delta_{V})^{\frac{s}{2}}M(t)f(t, \cdot)\|^{\frac{1}{s}}_{L^{2}(\mathbb{R}^{2})} \left\|M(t)f(t,\cdot)\right\|^{1-\frac{1}{s}}_{L^{2}(\mathbb{R}^{2})} \label{16.1} \\
& \lesssim&  t^{-1} \||J_{V}|^{s}(t)f(t, \cdot)\|^{\frac{1}{s}}_{L^{2}(\mathbb{R}^{2})} \left\|f(t,\cdot)\right\|^{1-\frac{1}{s}}_{L^{2}(\mathbb{R}^{2})} \notag
\end{eqnarray}
for $1<s<2$.
\end{remark}
\section {Proof of Theorem \ref{main theorem}}
\label{Proof}
We define the function space $X_{T}$ as follows
\begin{eqnarray}
X_{T}=\left\{f\in C\left([1,T];\mathcal{S}^{\prime}\right); |||f|||_{X_{T}}
=\left\||J_{V}|^{\alpha}f\right\|_{L^{\infty}\left([1,T];L^{2}(\mathbb{R}^{2})\right)}
+\sup_{t\in[1,T]}\|f\|_{L^{2}(\mathbb{R}^{2})}\notag
\right\},
\end{eqnarray}
where $T>1$ and $1<\alpha<2$.
Since we can obtain the local existence of solutions to the equation (\ref{NLS}) by the standard contraction mapping principle, we skip the proof in this section.
Multiplying both sides of  the equation (\ref{NLS}) by $|J_{V}|^{\alpha}$ and using Proposition \ref{proposition 1.1}, we have
 \begin{eqnarray}
& &  \hspace {1cm}\left(i \partial_t  + \frac{1}{2}\Delta_V\right)|J_{V}|^{\alpha}u \label{1.15} \\
&&   \hspace {0.5cm}   = it^{\alpha-1}M(-t)A(\alpha)M(t)u+\lambda|J_{V}|^{\alpha}\left( |u|^{p-1}u\right).\notag
 \end{eqnarray}
Let $ |J_{V}|^{\alpha}u=u_{\alpha}^{\uppercase\expandafter{\romannumeral 1}}+u_{\alpha}^{\uppercase\expandafter{\romannumeral 2}}$.
We consider
\begin{eqnarray}
\left\{
\begin{array}{l}
\left(i \partial_t  + \frac{1}{2}\Delta_{V}\right) u_{\alpha}^{\uppercase\expandafter{\romannumeral 1}} = \lambda |J_{V}|^{\alpha}\left( |u|^{p-1}u\right), \\
u_{\alpha}^{\uppercase\expandafter{\romannumeral 1}}(1) = |J_{V}|^{\alpha}(1)u_{0},
\end{array}
\right. \label{NLS1}
\end{eqnarray}
and
\begin{eqnarray}
\left\{
\begin{array}{l}
\left(i \partial_t  + \frac{1}{2}\Delta_{V}\right) u_{\alpha}^{\uppercase\expandafter{\romannumeral 2}} = it^{\alpha-1}M(-t)A(\alpha)M(t)u, \\
u_{\alpha}^{\uppercase\expandafter{\romannumeral 2}}(1) = 0
\end{array}
\right. \label{NLS2}
\end{eqnarray}
for  $p>2$, and $t\geq 1$, where $u=u(t,x)$ is a real valued unknown function, $x \in \mathbb{R}^{2}$,
$\Delta_{V}=\Delta-V(x)$, $|J_{V}|^{\alpha}(t)=M(-t)\left(-t^{2}\Delta_{V}\right)^{\frac{\alpha}{2}}M(t)$, $M(t)=e^{-\frac{i}{2t}|x|^{2}}$
 and $A(\alpha)=\alpha\left(-\Delta_{V}\right)^{\frac{\alpha}{2}}+\left[x\cdot\nabla,\left(-\Delta_{V}\right)^{\frac{\alpha}{2}}\right]$.

First we consider the integral equation
\begin{eqnarray}
u_{\alpha}^{\uppercase\expandafter{\romannumeral 1}}=e^{\frac{i}{2}t\Delta_{V}}e^{-\frac{i}{2}\Delta_{V}}|J_{V}|^{\alpha}(1)u_{0}
-i\lambda\int_{1}^{t}e^{\frac{i}{2}(t-\tau)\Delta_{V}}|J_{V}|^{\alpha}\left(|u|^{p-1}u\right)(\tau)d\tau \label{NLS3}
\end{eqnarray}
associated with (\ref{NLS1}).
For simplicity, we let $|J_{V}|^{\alpha}\left(|u|^{p-1}u\right)=F_{\alpha}.$ Then from (\ref{NLS3}) we have
\begin{eqnarray}
u_{\alpha}^{\uppercase\expandafter{\romannumeral 1}}=e^{\frac{i}{2}t\Delta_{V}}e^{-\frac{i}{2}\Delta_{V}}|J_{V}|^{\alpha}(1)u_0-i\lambda\int_{1}^{t}e^{\frac{i}{2}(t-\tau)\Delta_{V}}F_{\alpha}(\tau)d\tau. \label{NLS4}
\end{eqnarray}
We also have
\begin{eqnarray}
u_{\alpha}^{\uppercase\expandafter{\romannumeral 2}}=\int_{1}^{t}e^{\frac{i}{2}(t-\tau)\Delta_{V}}\tau^{\alpha-1}M(-\tau)A(\alpha)M(\tau)u(\tau)d\tau  \label{NLS8}
\end{eqnarray}
from (\ref{NLS2}).\\
By Proposition \ref{proposition 2.2} and Proposition \ref {theorem 2.2},
from (\ref{NLS4}) we have
\begin{eqnarray}
\|u_{\alpha}^{\uppercase\expandafter{\romannumeral 1}}\|_{L^{\infty}\left([1,T]; L^{2}(\mathbb{R}^{2})\right)}
 \lesssim\||J_{V}|^{\alpha}(1)u_0\|_{L^{2}(\mathbb{R}^{2})}+\left\|F_{\alpha}\right\|_{L^{1}\left([1,T]; L^{2}(\mathbb{R}^{2})\right)},
\label{4.7}
\end{eqnarray}
where  $F_{\alpha}=|J_{V}|^{\alpha}\left(|u|^{p-1}u\right)$, $|J_{V}|^{\alpha}(t)=M(-t)\left(-t^{2}\Delta_{V}\right)^{\frac{\alpha}{2}}M(t)$, and $M(t)=e^{-\frac{i}{2t}|x|^{2}}$.\\
By (\ref{n.m0}) in Lemma \ref{Lemma Jv}, and Lemma 3.4 in \cite{GOV}, we obtain
\begin{eqnarray}
&& \quad \left\|F_{\alpha}\right\|_{ L^{2}(\mathbb{R}^{2})}  \label{7.10} \\
&& =\left\||J_{V}|^{\alpha}\left(|u|^{p-1}u\right)\right\|_{ L^{2}(\mathbb{R}^{2})}  \notag \\
&& \lesssim \left(\left\| |J|^{\alpha}(|u|^{p-1}u)\right\|_{ L^{2}(\mathbb{R}^{2})}
              +t^{\alpha-\sigma}\left\|  |J|^{\alpha}(|u|^{p-1}u)\right\|_{ L^{2}(\mathbb{R}^{2})}^{\frac{\sigma }{\alpha}}
              \left\| |u|^{p-1}u \right\|_{ L^{2}(\mathbb{R}^{2})}^{1-\frac{\sigma }{\alpha}}
               \right) \notag \\
&& \lesssim \|u\|^{p-1}_{ L^{\infty}(\mathbb{R}^{2})} \left(\left\| |J|^{\alpha}u\right\|_{ L^{2}(\mathbb{R}^{2})} +t^{\alpha-\sigma} \left\||J|^{\alpha}u\right\|_{ L^{2}(\mathbb{R}^{2})}^{\frac{\sigma }{\alpha}} \left\| u\right\|_{ L^{2}(\mathbb{R}^{2})}^{1-\frac{\sigma }{\alpha}}\right)  \notag
\end{eqnarray}
for $p> 2$, $1<\alpha<2$ and $0<\sigma <1.$
By Lemma \ref{lemma 4.2}, from  (\ref{7.10}) we obtain
\begin{eqnarray}
&& \quad \left\|F_{\alpha}\right\|_{ L^{2}(\mathbb{R}^{2})}  \label{7.2} \\
&& \lesssim \|u\|^{p-1}_{ L^{\infty}(\mathbb{R}^{2})} \left(\left\| |J_{V}|^{\alpha}u\right\|_{ L^{2}(\mathbb{R}^{2})} +t^{\alpha-\sigma}\left\||J_V|^{\alpha}u\right\|_{ L^{2}(\mathbb{R}^{2})}^{\frac{\sigma }{\alpha}}
\left\|u\right\|_{ L^{2}(\mathbb{R}^{2})}^{1-\frac{\sigma }{\alpha}}
\right)  \notag
\end{eqnarray}
for $p> 2$, $1<\alpha<2$ and $0<\sigma < 1.$
By (\ref{16.1}) in Remark \ref{Remark 6.1},
from  (\ref{7.2}) we get
\begin{eqnarray}
&& \quad \left\|F_{\alpha}\right\|_{ L^{2}(\mathbb{R}^{2})}  \label{7.3} \\
&&  \lesssim\left(  t^{-1}\||J_{V}|^{\alpha}u\|^{\frac{1}{\alpha}}_{L^{2}(\mathbb{R}^{2})} \left\|u\right\|^{1-\frac{1}{\alpha}}_{L^{2}(\mathbb{R}^{2})}  \right)^{p-1} \notag \\
&& \quad \times \left(\left\| |J_{V}|^{\alpha}u\right\|_{ L^{2}(\mathbb{R}^{2})} +t^{\alpha-\sigma} \left\||J_V|^{\alpha}u\right\|_{ L^{2}(\mathbb{R}^{2})}^{
\frac{\sigma }{\alpha}} \left\|u\right\|_{ L^{2}(\mathbb{R}^{2})}^{1-\frac{\sigma }{\alpha}}
\right). \notag
\end{eqnarray}
Then we obtain
\begin{eqnarray}
&& \label{i1}  \left\|F_{\alpha}\right\|_{L^{1}\left([1,T]; L^{2}(\mathbb{R}^{2})\right)} \\
&&\lesssim
\|u_0\|_{L^{2}(\mathbb{R}^{2})}^{\left(1-\frac{1}{\alpha}\right)(p-1)}
\||J_{V}|^{\alpha}u\|_{L^{\infty}\left([1,T]; L^{2}(\mathbb{R}^{2})\right)}^{\frac{p-1}{\alpha}+1}
\notag \\
&& \quad +  \|u_0\|_{L^{2}(\mathbb{R}^{2})}^{(p-1)\left(1-\frac{1}{\alpha}\right)+1-\frac{\sigma }{\alpha}}\left\|t^{-(p-1)+\alpha-\sigma}\||J_{V}|^{\alpha}u\|_{L^{2}(\mathbb{R}^{2})}^{\frac{p-1+\sigma}{\alpha}}
                    \right\|_{L^{1}\left([1,T]\right)} \notag\\
&& \lesssim
\|u_0\|_{H^{\alpha}(\mathbb{R}^{2})}^{\left(1-\frac{1}{\alpha}\right)(p-1)}
\||J_{V}|^{\alpha}u\|_{L^{\infty}\left([1,T]; L^{2}(\mathbb{R}^{2})\right)}^{\frac{p-1}{\alpha}+1}\notag  \\
&& \quad + \|u_0\|_{H^{\alpha}(\mathbb{R}^{2})}^{(p-1)\left(1-\frac{1}{\alpha}\right)+1-\frac{\sigma }{\alpha}}
\|t^{-p+1+\alpha-\sigma}\|_{L^{1}\left([1,T]\right)}
\left\||J_{V}|^{\alpha}u \right\|_{L^{\infty}\left([1,T];L^{2}(\mathbb{R}^{2})\right)}^{\frac{p-1+\sigma}{\alpha}} \notag \\
&& \lesssim
\|u_0\|_{H^{\alpha}(\mathbb{R}^{2})}^{\left(1-\frac{1}{\alpha}\right)(p-1)}
\||J_{V}|^{\alpha}u\|_{L^{\infty}\left([1,T]; L^{2}(\mathbb{R}^{2})\right)}^{\frac{p-1}{\alpha}+1}\notag  \\
&& \quad + \|u_0\|_{H^{\alpha}(\mathbb{R}^{2})}^{(p-1)\left(1-\frac{1}{\alpha}\right)+1-\frac{\sigma }{\alpha}}
\left\||J_{V}|^{\alpha}u \right\|_{L^{\infty}\left([1,T];L^{2}(\mathbb{R}^{2})\right)}^{\frac{p-1+\sigma}{\alpha}}, \notag
\end{eqnarray}
since  we can choose $\alpha$ and $\sigma$
such that $-p+2+\alpha-\sigma<0$ for $p>2$, where $1<\alpha < \frac{3}{2}$ and $\frac{2}{3}<\sigma<1.$
By (\ref{n.m0}) in Lemma \ref{Lemma Jv} and (\ref{i1}), then we have
\begin{eqnarray}
&& \quad \|u_{\alpha}^{\uppercase\expandafter{\romannumeral 1}}\|_{L^{\infty}\left([1,T]; L^{2}(\mathbb{R}^{2})\right)} \label{7.11} \\
&& \lesssim \left\||J|^{\alpha}(1)u_0\right\|_{L^{2}(\mathbb{R}^{2})}
+\left\||J|^{\alpha}(1)u_0\right\|_{L^{2}(\mathbb{R}^{2})}^{\frac{\sigma}{\alpha}}\|u_0\|_{L^{2}(\mathbb{R}^{2})}^{1-\frac{\sigma}{\alpha}} \notag \\
&&\quad +\|u_0\|_{H^{\alpha}(\mathbb{R}^{2})}^{\left(1-\frac{1}{\alpha}\right)(p-1)}
\||J_{V}|^{\alpha}u\|_{L^{\infty}\left([1,T]; L^{2}(\mathbb{R}^{2})\right)}^{\frac{p-1}{\alpha}+1}\notag  \\
&& \quad + \|u_0\|_{H^{\alpha}(\mathbb{R}^{2})}^{(p-1)\left(1-\frac{1}{\alpha}\right)+1-\frac{\sigma }{\alpha}}
\left\||J_{V}|^{\alpha}u \right\|_{L^{\infty}\left([1,T];L^{2}(\mathbb{R}^{2})\right)}^{\frac{p-1+\sigma}{\alpha}} \notag
\end{eqnarray}
for $p>2$, where $1<\alpha < \frac{3}{2}$ and $\frac{2}{3}<\sigma<1$.

By Proposition \ref{theorem 2.2}, we have from (\ref{NLS8})
\begin{eqnarray}
\qquad \left\|u_{\alpha}^{\uppercase\expandafter{\romannumeral 2}}\right\|_{L^{\infty}\left([1,T];L^{2}(\mathbb{R}^{2})\right)}\lesssim
\left\|t^{\alpha-1}M(-t)A(\alpha)M(t)u\right\|_{L^{p_{1}^{\prime}}\left([1,T];L^{q_{1}^{\prime}}(\mathbb{R}^{2})\right)} \label{17.12}
\end{eqnarray}
for $1<\alpha<2$, where $M(t)=e^{-\frac{i}{2t}|x|^{2}},$
 $A(\alpha)=\alpha\left(-\Delta_{V}\right)^{\frac{\alpha}{2}}+\left[x\cdot\nabla,\left(-\Delta_{V}\right)^{\frac{\alpha}{2}}\right]$, $\frac{1}{p_{1}}+\frac{1}{q_{1}}=\frac{1}{2}$,
 $\frac{1}{p_{1}}+\frac{1}{p_{1}^{\prime}}=1,$ $\frac{1}{q_{1}}+\frac{1}{q_{1}^{\prime}}=1,$
 and $1< q_1^{\prime} < 2.$
Let $1<\alpha<\frac{3}{2}$. By Lemma \ref{Lemma A(s)} and the Sobolev inequality
\begin{eqnarray}
\|u\|_{L^{q_1}(\mathbb{R}^{2})}\lesssim \|(-\Delta)^{\frac{\alpha}{2}}u\|_{L^{2}(\mathbb{R}^{2})}^{\theta}
\|u\|_{L^{2}(\mathbb{R}^{2})}^{1-\theta}    \notag
\end{eqnarray}
for $0 <  \theta < \frac{1}{\alpha},$ where $q_1=\frac{2}{1-\theta \alpha}$,
from (\ref{17.12}) we have
\begin{eqnarray}
 && \quad \left\|u_{\alpha}^{\uppercase\expandafter{\romannumeral 2}}\right\|_{L^{\infty}\left([1,T];L^{2}(\mathbb{R}^{2})\right
)} \label{17.13}  \\
&\lesssim &\left\|t^{\alpha-1}M(t)u\right\|_{L^{p_{1}^{\prime}}\left([1,T];L^{q_{1}}(\mathbb{R}^{2})\right)} \notag  \\
&\lesssim &\left\|t^{\alpha-1} \|(-\Delta)^{\frac{\alpha}{2}}M(t)u\|_{L^{2}(\mathbb{R}^{2})}^{\theta}
\|M(t)u\|_{L^{2}(\mathbb{R}^{2})}^{1-\theta}
\right\|_{L^{p_{1}^{\prime}}([1,T])} \notag \\
&\lesssim  &\|u_0\|_{L^{2}(\mathbb{R}^{2})}^{1-\theta}
\left\|t^{\alpha-1} \|(-\Delta)^{\frac{\alpha}{2}}M(t)u\|_{L^{2}(\mathbb{R}^{2})}^{\theta}
\right\|_{L^{p_{1}^{\prime}}([1,T])} \notag \\
&\lesssim &\|u_0\|_{H^{\alpha}(\mathbb{R}^{2})}^{1-\theta}
\left\|t^{\alpha-1-\alpha\theta} \| |J|^{\alpha}u\|_{L^{2}(\mathbb{R}^{2})}^{\theta}
\right\|_{L^{p_{1}^{\prime}}([1,T])}   \notag \\
&\lesssim &\|u_0\|_{H^{\alpha}(\mathbb{R}^{2})}^{1-\theta}
\|t^{\alpha-1-\alpha\theta} \|_{L^{p_{1}^{\prime}}([1,T])}
\left\| |J|^{\alpha}u \right\|_{L^{\infty}([1,T]; L^{2}(\mathbb{R}^{2}))}^{\theta} \notag
\end{eqnarray}
for $0< \theta < \frac{1}{\alpha}$, where $\frac{1}{p_{1}}+\frac{1}{p_{1}^{\prime}}=1, \frac{1}{q_{1}}+\frac{1}{q_{1}^{\prime}}=1,
\frac{1}{p_{1}}+\frac{1}{q_{1}}=\frac{1}{2},$ $q_1=\frac{2}{1-\theta \alpha}$ and $1< q_{1}^{\prime} < 2$.
For $1<\alpha<\frac{3}{2},$ we choose $\theta \in \left(\frac{2}{3}, \frac{1}{\alpha}\right)$.
Then we have $p_1^{\prime}=\frac{2}{2-\alpha \theta}.$
Since $\left(\alpha-1-\alpha \theta\right)p_{1}^{\prime}+1=-\frac{\alpha (3\theta-2)}{2-\alpha \theta}<0$ for $1<\alpha<\frac{3}{2}$, where  $\frac{2}{3}<\theta <\frac{1}{\alpha},$ we have
\begin{eqnarray}
\|t^{\alpha-1-\alpha \theta} \|_{L^{p_{1}^{\prime}}([1,T])} \leq C. \label{i2}
\end{eqnarray}
By (\ref{17.13}), (\ref{i2}) and Lemma \ref{lemma 4.2}, we have
\begin{eqnarray}
 && \quad \left\|u_{\alpha}^{\uppercase\expandafter{\romannumeral 2}}\right\|_{L^{\infty}\left([1,T];L^{2}(\mathbb{R}^{2})\right
)} \label{7.14}  \\
&\lesssim&\|u_0\|_{H^{\alpha}(\mathbb{R}^{2})}^{\frac{1}{4}}
\left\| |J_{V}|^{\alpha}u \right\|_{L^{\infty}([1,T];L^{2}(\mathbb{R}^{2}))}^{\frac{3}{4}} \notag
\end{eqnarray}
for $1<\alpha < \frac{3}{2}.$

Using (\ref{7.11}) and (\ref{7.14}),  we have
\begin{eqnarray}
\left\||J_{V}|^{\alpha}u\right\|_{L^{\infty}\left([1,T];L^{2}(\mathbb{R}^{2})\right
)} &\leq&
\left\|u_{\alpha}^{\uppercase\expandafter{\romannumeral 1}}\right\|_{L^{\infty}\left([1,T];L^{2}(\mathbb{R}^{2})\right
)}
+
\left\|u_{\alpha}^{\uppercase\expandafter{\romannumeral 2}}\right\|_{L^{\infty}\left([1,T];L^{2}(\mathbb{R}^{2})\right)} \notag\\
& \lesssim &\left\|u_{0}\right\|_{H^{\alpha}(\mathbb{R}^2)\cap \dot{H}^{0,\alpha}(\mathbb{R}^2)} \notag \\
&&\quad +\|u_{0}\|_{H^{\alpha}(\mathbb{R}^2)\cap \dot{H}^{0,\alpha}(\mathbb{R}^2)}^{\left(1-\frac{1}{\alpha}\right)(p-1)}
\||J_{V}|^{\alpha}u\|_{L^{\infty}\left([1,T]; L^{2}(\mathbb{R}^{2})\right)}^{\frac{p-1}{\alpha}+1}\notag  \\
&& \quad + \|u_{0}\|_{H^{\alpha}(\mathbb{R}^2)\cap \dot{H}^{0,\alpha}(\mathbb{R}^2)}^{(p-1)\left(1-\frac{1}{\alpha}\right)+1-\frac{\sigma }{\alpha}}
\left\||J_{V}|^{\alpha}u \right\|_{L^{\infty}\left([1,T];L^{2}(\mathbb{R}^{2})\right)}^{\frac{p-1+\sigma}{\alpha}}\notag\\
&&\quad +\|u_{0}\|_{H^{\alpha}(\mathbb{R}^2)\cap \dot{H}^{0,\alpha}(\mathbb{R}^2)}^{\frac{1}{4}}
\left\| |J_{V}|^{\alpha}u \right\|_{L^{\infty}([1,T];L^{2}(\mathbb{R}^{2}))}^{\frac{3}{4}}\notag
\end{eqnarray}
for $p>2$,
where $1<\alpha < \frac{3}{2}$ and $\frac{2}{3}<\sigma<1$.
Then for a fixed $C>0$ we have
\begin{eqnarray}
\left\||J_{V}|^{\alpha}u\right\|_{L^{\infty}\left([1,T];L^{2}(\mathbb{R}^{2})\right
)}
\leq C\|u_{0}\|_{H^{\alpha}(\mathbb{R}^2)\cap \dot{H}^{0,\alpha}(\mathbb{R}^2)},
\end{eqnarray}
if $\|u_{0}\|_{H^{\alpha}(\mathbb{R}^2)\cap \dot{H}^{0,\alpha}(\mathbb{R}^2)}$ is small enough.
By a standard continuity argument and Remark \ref{Remark 6.1},
we have the time decay estimate (\ref{timedecay}) if
$\epsilon_{0}$ is small enough.
From (\ref{NLS}), we have
\begin{eqnarray*}
u(t)=e^{\frac{i}{2}t\Delta_V}\left(e^{-\frac{i}{2}\Delta_V}u_0-i\lambda\int_1^t e^{-\frac{i}{2}\tau\Delta_V}(|u|^{p-1}u)(\tau)d\tau\right).
\end{eqnarray*}
Let $u_+=e^{-\frac{i}{2}\Delta_{V}}u_0-i\lambda \int_{1}^{\infty}e^{-\frac{i}{2}\tau\Delta_{V}}(|u|^{p-1}u)(\tau)d\tau$. The we have
\begin{eqnarray*}
u(t)=e^{\frac{i}{2}t\Delta_V}u_+ +i\lambda\int_t^\infty e^{-\frac{i}{2}(t-\tau)\Delta_V}(|u|^{p-1}u)(\tau)d\tau.
\end{eqnarray*}
We obtain the scattering (\ref{scattering}) by a standard argument
from the time decay estimate (\ref{timedecay}). We omit the proof here.
\section{Appendix I}
\label{Appendix}
Let $M(t)=e^{-\frac{i}{2t}|x|^{2}}$
and $[B, D]=BD-DB$. To prove Proposition \ref{proposition 1.1} and Proposition \ref{proposition 1.01},
we consider the following lemmas (see \cite{CGV2014}).
\begin{lemma}\label{lemma 1.1}
We have the following identities:
\begin{equation}
[i\partial_{t},M(-t)]=\frac{|x|^{2}}{2t^{2}}M(-t),\label{1.2}
\end{equation}
and
\begin{equation}
[i\partial_{t},M(t)]=-\frac{|x|^{2}}{2t^{2}}M(t).\label{1.3}
\end{equation}
\end{lemma}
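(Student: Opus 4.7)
The plan is a direct computation: both identities are pointwise statements about multiplication operators whose symbols depend only on $t$ and $|x|^2$, so the commutator collapses to a single time derivative.

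First I would expand the commutator on a test function $\phi$. Since $M(\pm t)$ acts by pointwise multiplication and therefore commutes with itself under the Leibniz rule applied to $\partial_t$, one has
\begin{equation*}
[i\partial_t, M(\pm t)]\phi = i\partial_t(M(\pm t)\phi) - M(\pm t)(i\partial_t \phi) = i\bigl(\partial_t M(\pm t)\bigr)\phi.
\end{equation*}
Thus the problem reduces to computing $\partial_t M(\pm t)$ explicitly.

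Next, writing $M(-t) = e^{\frac{i}{2t}|x|^2}$ and treating $|x|^2$ as a parameter in $t$, I would differentiate the exponent $\frac{i|x|^2}{2t}$ with respect to $t$ to get $-\frac{i|x|^2}{2t^2}$, so
\begin{equation*}
\partial_t M(-t) = -\frac{i|x|^2}{2t^2}\,M(-t),
\end{equation*}
and multiplying by $i$ yields \eqref{1.2}. Similarly, differentiating the exponent $-\frac{i|x|^2}{2t}$ of $M(t)$ gives $\frac{i|x|^2}{2t^2}$, so
\begin{equation*}
\partial_t M(t) = \frac{i|x|^2}{2t^2}\,M(t),
\end{equation*}
and multiplication by $i$ yields \eqref{1.3}.

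There is really no obstacle here; the only subtlety worth a remark is that the identity is understood in the sense of operators acting on, say, $\mathcal{S}(\mathbb{R}^2)$ (or distributions), which justifies the Leibniz manipulation of $i\partial_t$ on a product of a time-dependent multiplier and a time-dependent function, and this is what makes the two terms involving $M(\pm t)\,i\partial_t\phi$ cancel.
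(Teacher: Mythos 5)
Your proof is correct and follows essentially the same route as the paper: expand the commutator on a test function, note the $M(\pm t)\,i\partial_t\phi$ terms cancel, and differentiate the exponent $\mp\frac{i|x|^2}{2t}$ in $t$ to get $\mp\frac{|x|^2}{2t^2}M(\pm t)$ after multiplying by $i$. The paper carries out the same computation for \eqref{1.2} and obtains \eqref{1.3} "by the similar method," which you have simply written out in full.
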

\begin{proof}
Since
\begin{eqnarray}
&&\qquad [i\partial_{t},M(-t)]f  \notag \\
& &=i\partial_{t}(M(-t)f)-M(-t)i\partial_{t}f\notag\\
& &= \frac{|x|^{2}}{2t^{2}}M(-t)f, \notag
\end{eqnarray}
then we obtain the first identity (\ref{1.2}).\\
By using the similar method, we get the second identity (\ref{1.3}).
\end{proof}

\begin{lemma}\label{lemma 1.2}
We have
\begin{equation}
[ \Delta,M(-t)]=M(-t)\left(\frac{in}{t}-\frac{|x|^{2}}{t^{2}}+2\frac{ix\cdot\nabla}{t}\right),\label{1.4}
\end{equation}
and
\begin{equation}
[ \Delta,M(t)]=M(t)\left(-\frac{in}{t}-\frac{|x|^{2}}{t^{2}}-2\frac{ix\cdot\nabla}{t}\right),\label{1.5}
\end{equation}
where  $n$ is the  generic space dimension.
\end{lemma}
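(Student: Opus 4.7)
The plan is to establish both identities by direct computation, applying the commutator to a test function $f$ and using Leibniz's rule. The key ingredient is the gradient of the phase: from $M(-t) = e^{\frac{i}{2t}|x|^2}$ one reads off $\partial_j M(-t) = \frac{i x_j}{t} M(-t)$, and similarly $\partial_j M(t) = -\frac{i x_j}{t}M(t)$. Once these are available, everything reduces to a product-rule calculation.

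For the first identity, I would expand $\Delta(M(-t)f) = \sum_j \partial_j^2(M(-t)f)$ by applying the product rule twice. Each term $\partial_j^2(M(-t)f)$ produces four pieces:
\begin{equation*}
\partial_j^2 M(-t)\cdot f + 2\,\partial_j M(-t)\cdot \partial_j f + M(-t)\,\partial_j^2 f,
\end{equation*}
where a short calculation gives $\partial_j^2 M(-t) = M(-t)\bigl(\frac{i}{t} - \frac{x_j^2}{t^2}\bigr)$. Summing over $j=1,\dots,n$ yields
\begin{equation*}
\Delta(M(-t)f) = M(-t)\!\left(\frac{in}{t}-\frac{|x|^2}{t^2}\right) f + \frac{2i}{t}M(-t)\,x\cdot\nabla f + M(-t)\,\Delta f.
\end{equation*}
Subtracting $M(-t)\Delta f$ gives exactly (\ref{1.4}).

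The second identity (\ref{1.5}) is proved identically, the only difference being the sign in $\partial_j M(t) = -\frac{i x_j}{t} M(t)$, which propagates as a global sign flip in the first-order-in-$1/t$ pieces (the terms linear in $\nabla M(t)$), while the $|x|^2/t^2$ piece keeps its sign since it comes from $(\partial_j M)^2$-type contributions. Thus the first-order pieces pick up minus signs, producing the asserted expression.

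I do not expect any genuine obstacle here — the statement is purely an exercise in the product rule, and the only point worth double-checking is the sign of the quadratic-in-$x$ term, which arises from $\partial_j M \cdot \partial_j M /M$ and therefore is insensitive to the sign of the phase. I would present both identities in parallel and remark that they can equivalently be obtained from each other by replacing $t$ by $-t$.
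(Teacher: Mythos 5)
Your proof is correct and follows essentially the same route as the paper: a direct product-rule expansion of $\Delta(M(\mp t)f)$ using $\partial_j M(-t)=\frac{ix_j}{t}M(-t)$. The only cosmetic difference is that the paper obtains (\ref{1.5}) from (\ref{1.4}) by taking complex conjugates rather than redoing the computation with the flipped sign, which is an equivalent shortcut.
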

\begin{proof}
By some calculations, we have
\begin{eqnarray}
&&\qquad [\Delta,M(-t)]f              \notag \\
& &=\Delta(M(-t)f)-M(-t)\Delta f      \notag\\
& &= M(-t) \Delta f +\Delta(M(-t))f+2\nabla M(-t)\cdot \nabla f -M(-t)\Delta f  \notag \\
& &= \Delta(M(-t))f+2\nabla M(-t)\cdot \nabla f  \notag \\
& &=M(-t)\left(\frac{in}{t}-\frac{|x|^{2}}{t^{2}}+2\frac{ix\cdot\nabla}{t}\right)f. \notag
\end{eqnarray}
Taking complex conjugates, we get the second identity (\ref{1.5}).
\end{proof}

We have the following commutator relations
\begin{lemma}\label{lemma 1.3}
\begin{equation}
\left[ i\partial_{t}+\frac{1}{2}\Delta,M(-t)\right]=\frac{1}{2}M(-t)\left(\frac{in}{t}+2\frac{ix \cdot \nabla}{t}\right),\label{1.6}
\end{equation}
and
\begin{equation}
\left[  i\partial_{t}+\frac{1}{2}\Delta,M(t)\right]=M(t)\left(-\frac{in}{2t}-\frac{|x|^{2}}{t^{2}}-\frac{ix\cdot\nabla}{t}\right),\label{1.7}
\end{equation}
where   $n$ is the  generic space  dimension.
\end{lemma}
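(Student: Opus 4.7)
The plan is to reduce Lemma \ref{lemma 1.3} directly to the two preceding lemmas by exploiting the linearity of the commutator bracket $[\cdot,\cdot]$. Writing
\begin{equation}
\left[i\partial_t+\tfrac{1}{2}\Delta,M(\pm t)\right]=\left[i\partial_t,M(\pm t)\right]+\tfrac{1}{2}\left[\Delta,M(\pm t)\right], \notag
\end{equation}
one can substitute the right-hand sides from Lemma \ref{lemma 1.1} and Lemma \ref{lemma 1.2} and collect terms. Since $|x|^{2}/t^{2}$ is a pure multiplication operator, it commutes with $M(\pm t)$, so the $\frac{|x|^{2}}{2t^{2}}M(-t)$ factor produced by Lemma \ref{lemma 1.1} can be rewritten as $M(-t)\frac{|x|^{2}}{2t^{2}}$, and similarly in the second identity.

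For the first identity, the key cancellation is that the $+\frac{|x|^{2}}{2t^{2}}M(-t)$ contribution from \eqref{1.2} and the $-\frac{|x|^{2}}{2t^{2}}M(-t)$ contribution coming from the $\frac{1}{2}\cdot(-\frac{|x|^{2}}{t^{2}})$ piece of \eqref{1.4} cancel exactly, leaving only the terms $M(-t)\left(\frac{in}{2t}+\frac{ix\cdot\nabla}{t}\right)$, which matches \eqref{1.6}. For the second identity the analogous quadratic terms have the same sign: $-\frac{|x|^{2}}{2t^{2}}M(t)$ from \eqref{1.3} plus $-\frac{|x|^{2}}{2t^{2}}M(t)$ from the $\frac{1}{2}\cdot(-\frac{|x|^{2}}{t^{2}})$ piece of \eqref{1.5}, summing to $-\frac{|x|^{2}}{t^{2}}M(t)$, which together with the $-\frac{in}{2t}M(t)$ and $-\frac{ix\cdot\nabla}{t}M(t)$ pieces recovers \eqref{1.7}.

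There is essentially no analytic obstacle here; the entire content is algebraic. The only point requiring a moment's care is keeping track of the order of the factors and the sign produced by conjugation, which is precisely what ensures the $\tfrac{1}{2}\Delta$ contribution combines correctly with $i\partial_t$. Accordingly, the proof is a two-line computation carried out separately for $M(-t)$ and $M(t)$, and no further machinery beyond Lemmas \ref{lemma 1.1} and \ref{lemma 1.2} is needed.
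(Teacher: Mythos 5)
Your proposal is correct and follows exactly the paper's own argument: split the commutator by linearity, insert the identities from Lemmas \ref{lemma 1.1} and \ref{lemma 1.2}, and observe that the $\frac{|x|^{2}}{2t^{2}}$ terms cancel for $M(-t)$ and add for $M(t)$. The sign bookkeeping you describe matches the stated right-hand sides \eqref{1.6} and \eqref{1.7}, so nothing further is needed.
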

\begin{proof}
By Lemmas \ref{lemma 1.1} and  \ref{lemma 1.2} , we have
\begin{eqnarray}
&&\qquad \left [ i\partial_{t}+\frac{1}{2}\Delta,M(-t)\right]f              \notag \\
& &= [ i\partial_{t},M(-t)]f + \frac{1}{2}[\Delta,M(-t)]f      \notag\\
& &= \frac{|x|^{2}}{2t^{2}}M(-t)f + \frac{1}{2}M(-t)\left(\frac{in}{t}-\frac{|x|^{2}}{t^{2}}+2\frac{ix \cdot \nabla}{t}\right)f   \notag \\
& &= \frac{1}{2}M(-t)\left(\frac{in}{t}+2\frac{ix \cdot \nabla}{t}\right) f . \notag
\end{eqnarray}
By Lemmas \ref{lemma 1.1} and  \ref{lemma 1.2} , we  also get the commutator relation (\ref{1.7}).
\end{proof}

\begin{lemma}\label{lemma 1.4}
Let $\Delta_{V}=\Delta-V(x)$. For $s \geq 0$, we have
\begin{equation}
\left[ i\partial_{t}+\frac{1}{2}\Delta_{V},\left(-t^{2}\Delta_{V}\right)^{\frac{s}{2}}\right]=\frac{is}{t}\left(-t^{2}\Delta_{V}\right)^{\frac{s}{2}}. \label{1.8}
\end{equation}
\end{lemma}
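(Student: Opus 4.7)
The plan is to reduce the computation to elementary manipulations by factoring out the time dependence from the fractional power. Since the paper considers $t\geq 1$, we have $t>0$, and the operator $-\Delta_V$ is non-negative and self-adjoint by hypotheses $(\mathrm{H1})$ and $(\mathrm{H2})$; consequently $-t^2\Delta_V$ is a positive scalar multiple of $-\Delta_V$, and by the functional calculus we can identify
\begin{equation*}
\left(-t^{2}\Delta_{V}\right)^{\frac{s}{2}} \;=\; t^{s}\,\left(-\Delta_{V}\right)^{\frac{s}{2}}.
\end{equation*}
This identification is the key algebraic step that separates time and space.

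Next, I would split the commutator into two pieces:
\begin{equation*}
\left[i\partial_{t}+\tfrac{1}{2}\Delta_{V},\,t^{s}(-\Delta_{V})^{\frac{s}{2}}\right]
= \left[i\partial_{t},\,t^{s}(-\Delta_{V})^{\frac{s}{2}}\right] + \left[\tfrac{1}{2}\Delta_{V},\,t^{s}(-\Delta_{V})^{\frac{s}{2}}\right].
\end{equation*}
The second commutator vanishes: the scalar factor $t^{s}$ passes through, and $\Delta_{V}$ and $(-\Delta_{V})^{s/2}$ are both functions of the same self-adjoint operator, so they commute by the functional calculus. For the first commutator, applied to a smooth test function $\psi(t,x)$, the spatial operator $(-\Delta_V)^{s/2}$ is time-independent, so the Leibniz rule for $\partial_t$ produces only the derivative of the scalar factor $t^s$, giving
\begin{equation*}
\left[i\partial_{t},\,t^{s}(-\Delta_{V})^{\frac{s}{2}}\right]\psi
= i s\,t^{s-1}(-\Delta_{V})^{\frac{s}{2}}\psi
= \tfrac{is}{t}\,t^{s}(-\Delta_{V})^{\frac{s}{2}}\psi
= \tfrac{is}{t}\left(-t^{2}\Delta_{V}\right)^{\frac{s}{2}}\psi.
\end{equation*}
Combining the two pieces yields \eqref{1.8}.

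The only delicate point is the justification that $(-t^2\Delta_V)^{s/2}=t^s(-\Delta_V)^{s/2}$ and that $[\Delta_V,(-\Delta_V)^{s/2}]=0$ on a suitable dense domain; both follow from spectral theory since $-\Delta_V$ is a non-negative self-adjoint operator on $L^2(\mathbb{R}^2)$ with purely absolutely continuous spectrum (as recalled in the introduction), so the usual Borel functional calculus applies and the identities hold on the natural domain of $(-\Delta_V)^{s/2}$. Hence the main obstacle is essentially bookkeeping on domains rather than any analytic difficulty, and the structural identity falls out immediately.
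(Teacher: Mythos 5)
Your proof is correct and follows essentially the same route as the paper: the paper likewise splits the commutator, discards $\bigl[\Delta_{V},(-t^{2}\Delta_{V})^{\frac{s}{2}}\bigr]$ via the functional calculus, and evaluates $\bigl[i\partial_{t},(-t^{2}\Delta_{V})^{\frac{s}{2}}\bigr]$ by differentiating the $t$-dependence. Your explicit factorization $(-t^{2}\Delta_{V})^{\frac{s}{2}}=t^{s}(-\Delta_{V})^{\frac{s}{2}}$ merely makes transparent the step the paper labels ``some simple calculations.''
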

\begin{proof}
By the commutator relation $\left[ \left(-\Delta_{V}\right)^{\frac{s}{2}}, \Delta_{V}\right]=0,$  we have
\begin{eqnarray}
&& \qquad  \left[ i\partial_{t}+\frac{1}{2}\Delta_{V},\left(-t^{2}\Delta_{V}\right)^{\frac{s}{2}}\right]f     \label{1.9} \\
& &=  \left[ i\partial_{t},\left(-t^{2}\Delta_{V}\right)^{\frac{s}{2}}\right]f + \frac{1}{2} \left[ \Delta_{V},\left(-t^{2}\Delta_{V}\right)^{\frac{s}{2}}\right]f      \notag\\
& &=  \left[ i\partial_{t},\left(-t^{2}\Delta_{V}\right)^{\frac{s}{2}}\right]f .\notag
\end{eqnarray}
By some simple calculations, we have
\begin{eqnarray}
\label{1.10} \left[ i\partial_{t},\left(-t^{2}\Delta_{V}\right)^{\frac{s}{2}}\right]f
&=& i\partial_{t}\left[\left(-t^{2}\Delta_{V}\right)^{\frac{s}{2}}f \right]-i\left[\left(-t^{2}\Delta_{V}\right)^{\frac{s}{2}}\right]\partial_{t}f  \\
&=&\frac{ is}{t}\left(-t^{2}\Delta_{V}\right)^{\frac{s}{2}}f.   \notag
\end{eqnarray}
Combining (\ref{1.9}) and (\ref{1.10}), we have our desired result.
\end{proof}

\subsection{Proof of Proposition \ref{proposition 1.1}}
Since  $[B,DE]=[B,D]E+D[B,E]$,  then we have
\begin{eqnarray}
&& \label{1.12 }\hspace{1cm} \left[ i\partial_{t}+\frac{1}{2}\Delta_{V}, |J_{V}|^{s}(t)\right]f  \\
&&  \hspace{0.5cm} =\left[ i\partial_{t}+\frac{1}{2}\Delta_{V},M(-t)\left(-t^{2}\Delta_{V}\right)^{\frac{s}{2}}M(t)\right] f          \notag \\
& &\hspace{0.5cm} = \left[ i\partial_{t}+\frac{1}{2}\Delta,M(-t)\right]\left(-t^{2}\Delta_{V}\right)^{\frac{s}{2}}M(t) f \notag \\
 &  &  \qquad\qquad  + M(-t)\left[ i\partial_{t}+\frac{1}{2}\Delta_{V},\left(-t^{2}\Delta_{V}\right)^{\frac{s}{2}}M(t)\right] f.    \notag
\end{eqnarray}
By Lemmas \ref{lemma 1.3}, \ref{lemma 1.4} and $[B, DE]=[B, D]E+D[B, E]$, we have
\begin{eqnarray}
&&  \hspace {1cm}\left[ i\partial_{t}+\frac{1}{2}\Delta,M(-t)\right]\left(-t^{2}\Delta_{V}\right)^{\frac{s}{2}}M(t) f  \label{1.13}  \\
 &&   \qquad \qquad +  M(-t)\left[ i\partial_{t}+\frac{1}{2}\Delta_{V},\left(-t^{2}\Delta_{V}\right)^{\frac{s}{2}}M(t)\right] f         \notag \\
& & \hspace {0.5cm}=  \frac{i}{t}\left|J_{V}\right|^{s}(t)f+\frac{i}{t}M(-t)x\cdot\nabla \left(-t^{2}\Delta_{V}\right)^{\frac{s}{2}}M(t) f\notag\\
&&   \qquad \qquad +M(-t)\left[ i\partial_{t}+\frac{1}{2}\Delta_{V},\left(-t^{2}\Delta_{V}\right)^{\frac{s}{2}}\right] M(t)f  \notag \\
 & &\qquad  \qquad  +   M(-t)\left(-t^{2}\Delta_{V}\right)^{\frac{s}{2}}\left[ i\partial_{t}+\frac{1}{2}\Delta_{V},M(t)\right] f  \notag \\
 & & \hspace {0.5cm}=  \frac{i}{t}\left|J_{V}\right|^{s}(t)f+\frac{i}{t}M(-t)x\cdot\nabla \left(-t^{2}\Delta_{V}\right)^{\frac{s}{2}}M(t) f \notag\\
 && \qquad \qquad+M(-t)\frac{is}{t}\left(-t^{2}\Delta_{V}\right)^{\frac{s}{2}}M(t)f   \notag \\
 & &\qquad \qquad   +   M(-t)\left(-t^{2}\Delta_{V}\right)^{\frac{s}{2}}M(t)\left(-\frac{i}{t}-\frac{|x|^{2}}{t^{2}}-i\frac{x\cdot\nabla}{t}\right) f  \notag  \\
 & &  \hspace {0.5cm}=  \frac{is}{t}\left|J_{V}\right|^{s}(t)f+\frac{i}{t} M(-t) \left[ x\cdot\nabla, \left(-t^{2}\Delta_{V}\right)^{\frac{s}{2}}M(t)\right]f  \notag\\
 &&     \qquad \qquad     - M(-t)\left(-t^{2}\Delta_{V}\right)^{\frac{s}{2}}\frac{|x|^{2}}{t^{2}}M(t)f.  \notag
\end{eqnarray}
Using   $[B, DE]=[B, D]E+D[B, E]$ and $\left(-t^{2}\Delta_{V}\right)^{\frac{s}{2}}[x\cdot\nabla, M(t)]f=\left(-t^{2}\Delta_{V}\right)^{\frac{s}{2}}x\cdot\left(\nabla M(t)\right)f$,
 we have
 \begin{eqnarray}
 & &  \hspace {1cm}\frac{is}{t}\left|J_{V}\right|^{s}(t)f+\frac{i}{t} M(-t) \left[ x\cdot\nabla, \left(-t^{2}\Delta_{V}\right)^{\frac{s}{2}}M(t)\right]f   \label{1.14} \\
 &&     \qquad \qquad     - M(-t)\left(-t^{2}\Delta_{V}\right)^{\frac{s}{2}}\frac{|x|^{2}}{t^{2}}M(t)f  \notag\\
 &&     \hspace {0.5cm}=it^{s-1}M(-t)[s(-\Delta_{V})^{\frac{s}{2}}]M(t)f+\frac{i}{t}M(-t)
            [x\cdot \nabla, (-t^{2}\Delta_{V})^{\frac{s}{2}}]M(t)f\notag \\
 &&     \qquad \qquad +\frac{i}{t} M(-t)(-t^{2}\Delta_{V})^{\frac{s}{2}}[x\cdot \nabla, M(t)]f
            - M(-t)\left(-t^{2}\Delta_{V}\right)^{\frac{s}{2}}\frac{|x|^{2}}{t^{2}}M(t)f  \notag\\
 & &  \hspace {0.5cm}=it^{s-1}M(-t) A(s) M(t)f, \notag
\end{eqnarray}
where $A(s)=s\left(-\Delta_{V}\right)^{\frac{s}{2}}+\left[x\cdot\nabla,\left(-\Delta_{V}\right)^{\frac{s}{2}}\right]$.\\
Combining (\ref{1.12 }), (\ref{1.13}) and (\ref{1.14}), we complete the proof of (\ref{1.11}).

\subsection{Proof of Proposition \ref{proposition 1.01}}
Let $S=x\cdot \nabla$.
By the formula
\begin{eqnarray}
\left(-\Delta_{V}\right)^{\frac{s}{2}}f&=&c(s)(-\Delta_{V})\int_{0}^{\infty}\tau^{\frac{s}{2}-1}
\left(\tau-\Delta_{V}\right)^{-1}fd\tau \notag
\end{eqnarray}
for $0<s<2$, where $c(s)^{-1}=\int_{0}^{\infty} \tau^{\frac{s}{2}-1}(\tau+1)^{-1}d\tau$,
we get
\begin{eqnarray}
A(s)=s\left(-\Delta_{V}\right)^{\frac{s}{2}}+c(s)\int_{0}^{\infty}
\tau^{\frac{s}{2}-1}[S,-\Delta_{V}(\tau-\Delta_{V})^{-1}]d\tau. \label{2.16}
\end{eqnarray}
Using $[B, DE]=[B, D]E+D[B, E],$
we have
\begin{eqnarray}
&&[S,-\Delta_{V}(\tau-\Delta_{V})^{-1}] \label{2.17}\\
&=&[S,-\Delta_{V}](\tau-\Delta_{V})^{-1}-\Delta_{V}[S,(\tau-\Delta_{V})^{-1}] \notag \\
&=&[S,-\Delta_{V}](\tau-\Delta_{V})^{-1}\notag\\
&& \quad+\Delta_{V}(\tau-\Delta_{V})^{-1}[S, -\Delta_{V}](\tau-\Delta_{V})^{-1}. \notag
\end{eqnarray}
Since $\Delta_{V}=\Delta-V(x),$
we obtain
\begin{eqnarray}
&&[S,-\Delta_{V}] \label{2.18} \\
&=&[S,-\Delta]+[S,V] \notag \\
&=&-(x\cdot \nabla)\Delta+\Delta(x\cdot \nabla)+x\cdot \nabla V-V x\cdot \nabla \notag\\
&=&2\Delta+SV \notag \\
&=&2\Delta_{V}+W, \notag
\end{eqnarray}
where $W=(S+2)V$.
By (\ref{2.17}) and (\ref{2.18}), we have
\begin{eqnarray}
&&[S,-\Delta_{V}(\tau-\Delta_{V})^{-1}]  \label{2.19}\\
&=&2\Delta_{V}(\tau-\Delta_{V})^{-1}+W(\tau-\Delta_{V})^{-1}\notag \\
&&\quad+\Delta_{V}(\tau-\Delta_{V})^{-1}[S, -\Delta_{V}](\tau-\Delta_{V})^{-1} \notag\\
&=&2\Delta_{V}(\tau-\Delta_{V})^{-1}+W(\tau-\Delta_{V})^{-1}\notag \\
&&\quad+\Delta_{V}(\tau-\Delta_{V})^{-1}(2\Delta_{V}+W)(\tau-\Delta_{V})^{-1} \notag\\
&=&2\tau\Delta_{V}(\tau-\Delta_{V})^{-2}+\tau (\tau-\Delta_{V})^{-1}W(\tau-\Delta_{V})^{-1}. \notag
\end{eqnarray}
By (\ref{2.16}) and (\ref{2.19}), we get
\begin{eqnarray}
A(s)&=&s\left(-\Delta_{V}\right)^{\frac{s}{2}}+2c(s)\int_{0}^{\infty}
\tau^{\frac{s}{2}}\Delta_{V}(\tau-\Delta_{V})^{-2}d\tau  \label{2.20} \\
&&+c(s)\int_{0}^{\infty}
\tau^{\frac{s}{2}}(\tau-\Delta_{V})^{-1}W(\tau-\Delta_{V})^{-1}d\tau . \notag
\end{eqnarray}
Since
\begin{eqnarray}
s\left(-\Delta_{V}\right)^{\frac{s}{2}}=-2c(s)\int_{0}^{\infty}
\tau^{\frac{s}{2}}\Delta_{V}(\tau-\Delta_{V})^{-2}d\tau \notag
\end{eqnarray}
by integrating by parts,
we have our desired result from (\ref{2.20}).

\section{Appendix II: zero is not resonance}
\label{resonance}

In this section we can prove the lack of resonance at the origine, i.e. we shall prove that the origin is not resonance point, recalling that the definition of resonance used in \cite{JN01} and Theorem 6.2 guarantee that
zero is a resonance point can be characterized by the existence of solution
$$ \Psi(x) = c_0 + \Psi_0(x), \ \ c_0 \in \mathbb{C},\ \ \Psi_0 \in L^q(\mathbb{R}^2), \ \exists q \in (2,\infty)$$
to the equation
\begin{equation}\label{eq.A3.1}
    -\Delta \Psi + V \Psi = 0.
\end{equation}

Using  Lemma 6.4 and the relation (6.94) in \cite{JN01}, assuming $\beta > 10$, we can deduce further
$$ \Psi_0(x) = O(\langle x \rangle^{-1} ), \ \ $$
$$ \nabla \Psi_0(x) = O(\langle x \rangle^{-2} ).$$

Rewriting \eqref{eq.A3.1} in the form
$$ -\Delta \Psi_0 + V \Psi = 0, $$
multiplying by $\overline{\Psi}$ and integrating over $|x| \leq R,$ we get
$$ \int_{|x| \leq R} |\nabla \Psi_0(x)|^2 dx - \overline{c_0}\int_{|x|=R} \partial_r \Psi_0 (x) dS_x + \int_{|x| \leq R} V(x) | \Psi(x)|^2 dx =0.$$

The asymptotics of $\Psi_0, \partial_r \Psi_0$ enables one to take the limit $R \to \infty$ and arrive at
$$  \int_{\mathbb{R}^2} |\nabla \Psi_0(x)|^2 dx  + \int_{\mathbb{R}^2} V(x) | \Psi(x)|^2 dx =0$$
and the assumption $V \geq 0$ implies $\Psi=0.$


\bigskip
\textbf{Acknowledgments.}

V. Georgiev was supported in part by  Project 2017 ``Problemi stazionari e di evoluzione nelle equazioni di campo nonlineari" of INDAM,
GNAMPA - Gruppo Nazionale per l'Analisi Matematica,
la Probabilit\`a e le loro Applicazioni,
by Institute of Mathematics and Informatics,
Bulgarian Academy of Sciences and Top Global University Project, Waseda University,  by the University of Pisa, Project PRA 2018 49 and project ``Dinamica di equazioni nonlineari dispersive", ``Fondazione di Sardegna", 2016. C. Li was partially supported by the Education Department of Jilin Province [2018] and NNSFC under Grant Number 11461074.


\begin{thebibliography}{60}
\bibitem{Agmon} S. Agmon, \textit{Spectral, Properties of Schr\"{o}dinger operators and scattering theory}, Annali della Scuola Normale Superiore di Pisa-Classe di Scienze 2.2 (1975), 151--218.

\bibitem{Agmon2} S. Agmon, \textit{Lower bounds for solutions of Schr\"{o}dinger equations}, J. Analyse Math., \textbf{23} (1970), 1 -- 25.

\bibitem{AF2008} P. D'Ancona and L. Fanelli, \textit{Strichartz and smoothing estimates for dispersive equations with magnetic potentials}, Communications in Partial Differential Equations, \textbf{33} (2008), 1082--1112.

\bibitem{AFVV} P. D'Ancona, L. Fanelli, L. Vega, and N. Visciglia, \textit{Endpoint Strichartz estimates for the magnetic Schr\"{o}dinger equation}, Journal of Functional Analysis, \textbf{258} (2010), 3227--3240.

\bibitem{AM1998} P. Auscher, A. McIntosh, \textit{Heat kernels of second order complex elliptic operators and applications}, Journal of Functional Analysis, \textbf{152} (1998), 22--73.

\bibitem{Ba1984}  J. E. Barab, \textit{Nonexistence of asymptotically free
solutions for a nonlinear Schr\"{o}dinger equation}, J. Math. Phy., \textbf{25} (1984), 3270--3273.

\bibitem{BDS2016} K. Bogdan, J. Dziuba$\acute{n}$ski, and K. Szczypkowski,
\textit{Sharp Gaussian estimates for Schr\"{o}dinger heat kernels: L$^{p}$ integrability conditions}, 2016, arXiv: 1511.07167v3.

\bibitem{BM16}  J.-M. Bouclet and  H. Mizutani, \textit{Uniform resolvent and Strichartz estimates for Schr\"odinger equations with critical singularities,} Transactions of the American Mathematical Society, \textbf{370} (2018), 7293--7333.

\bibitem{BPSTZ2003} N. Burq, F. Planchon,  J. G. Stalker and A. S. Tahvildar-Zadeh, \textit{Strichartz estimates for the wave and Schr\"{o}dinger equations with the inverse-square potential}, Journal of Functional Analysis, \textbf{203} (2003), 519--549.

\bibitem{BPSTZ2004} N. Burq, F. Planchon,  J. G. Stalker and A. S. Tahvildar-Zadeh, \textit{Strichartz estimates for the wave and Schr\"{o}dinger equations with potentials of critical decay}, Indiana Univ. Math. J., \textbf{53} (2004), no. 6, 1665--1680.

\bibitem{CGV2014}  S. Cuccagna, V. Georgiev and N. Visciglia,  \textit{Decay and scattering of small
solutions of pure power NLS in $\mathbb{R}$ with $p>3$  and with  a potential,}
Communications on Pure and Applied Mathematics, \textbf{67} (2014), no.2, 957--981.

\bibitem{GI2005} V. Georgiev and A. Ivanov, \textit{Existence  and mapping properties of wave operator for the Schr\"{o}dinger  equation with singular potential},  Proceedings of the American Mathematical Society, \textbf{133} (2005),1993--2003.

\bibitem{GV2012}  V. Georgiev and B. Velichkov, \textit{Decay estimates for the supercritical 3-D Schr\"{o}dinger  equation with rapidly decreasing potential}, Progress in Mathematics, \textbf{301} (2012), 145--162.

  \bibitem{GV2007}
    V.  Georgiev and N. Visciglia, \textit{About resonances for Schr\"odinger operators with short range singular perturbation. Topics in contemporary differential geometry,} complex analysis and mathematical physics, World Sci. Publ., Hackensack, NJ, (2007), 74 -- 84.

 \bibitem{GOV} J. Ginibre, T. Ozawa and G. Velo, \textit{On the existence of wave operators for a class of nonlinear Schr\"{o}dinger equations,} Ann. Inst. H. Poincar$\rm{\acute{e}}$ Phys. Th$\rm{\acute{e}}$or.,  \textbf{60} (1994), 211--239.


\bibitem{HLN16} N. Hayashi, C. Li, and P. I. Naumkin, \textit{Nonlinear Schr\"{o}dinger systems in 2d with nondecaying final data}, Journal of Differential Equations, \textbf{260} (2016), Issue 2, 1472--1495.

\bibitem{HLN18} N. Hayashi, C. Li, and P. I. Naumkin, \textit{Critical nonlinear Schr\"{o}dinger equations in higher space dimensions}, J. Math. Soc. Japan, \textbf{70} (2018), no. 4, 1475--1492.

\bibitem{HN98}  N. Hayashi and P. Naumkin,  \textit{Asymptotics for large time of solutions to the nonlinear Schr\"{o} dinger and Hartree equations}, Amer. J. Math., \textbf{120} (1998), no. 2, 369--389.

\bibitem{HO1988} N. Hayashi and T. Ozawa, \textit{Scattering theory in the weighted $L^{2}(\mathbb{R}^{n})$ spaces for some Schr\"{o}dinger equations,} Ann. Inst. H. Poincare Phys. Theor., \textbf{48} (1988), 17--37.

\bibitem{JN01} A. Jensen and G. Nenciu, \textit{ A unified approach to resolvent expansions at thresholds.} Rev. Math. Phys. \textbf{13} (2001), no. 6, 717 -- 754.

\bibitem{JJL}
G. Jin, Y. Jin and C. Li, \textit{The initial value problem for nonlinear Schr\"{o}dinger equations with a dissipative nonlinearity in one space dimension}, Journal of Evolution Equations, \textbf{16} (2016), no.4, 983--995.

\bibitem{KeTao98} M. Keel and T. Tao, \textit{Endpoint Strichartz estimates,} Amer. J. Math.,
\textbf{2015} (1998), 955--980.

\bibitem{KS} N. Kita and A. Shimomura, \textit{Large time behavior of solutions to Schr\"{o}dinger equations with a dissipative nonlinearity for arbitrarily large initial data,} J. Math. Soc. Japan,
\textbf{61} (2009), no. 1, 39--64.

\bibitem{LS16} C. Li and H. Sunagawa, \textit{On Schr\"{o}dinger systems with cubic dissipative nonlinearities of derivative type,} \textbf{29}, (2016), no. 5, Nonlinearity,1537--1563.

\bibitem{Li2017} Z. Li and L. Zhao, \textit{Decay and scattering of solutions to nonlinear Schr\"{o}dinger equations with regular potentials for nonlinearities of sharp growth,} J. Math. Study, \textbf{50} (2017), 277--290.

\bibitem{MS91} H. P. McKean and J. Shatah, \textit{The nonlinear Schr\"{o}dinger equation and the nonlinear heat equation reduction to linear form,} Comm. Pure Appl. Math., \textbf{44} (1991), no. 8-9, 1067--1080.

\bibitem{Mi07} T. Mizumachi, \textit{Asymptotic stability of small solitons for 2D nonlinear Schr\"{o}dinger equations with potential,} J. Math. Kyoto Univ. (JMKYAZ), \textbf{47} (2007), 599--620.

\bibitem{Mo18}  K. Mochizuki, \textit{Spectral and scattering theory for second-order partial differential operators,} Monographs and Research Notes in Mathematics. CRC Press, Boca Raton, FL, 2017.

\bibitem{Ozawa} T. Ozawa, \textit{ Long range scattering for nonlinear Schr\"odinger equations in one space dimension,} Comm. Math. Phys., \textbf{139} (1991), no. 3, 479--493.


\bibitem{SaSuYa} Y. Sagawa, H. Sunagawa, and S. Yasuda, \textit{A sharp lower bound
for the lifespan of small solutions to the Schr\"{o}dinger equation with a subcritical power nonlinearity,}
Differential and Integral Equations, \textbf{31} (2018), no. 9-10, 685--700.

\bibitem{Sch} W. Schlag, \textit{Dispersive estimates for Schr\"{o}dinger operators in dimension two,} Comm. Math. Phys., \textbf{257} (2005), no.1, 87--117.

\bibitem{Si05} B. Simon, \textit{ Schr\"odinger semigroups,} Bull. Amer. Math. Soc. (N.S.) \textbf{7} (1982), no. 3, 447-- 526.

\bibitem{simon17} B. Simon, \textit{Tosio Kato's work on non-relativistic quantum mechanics: an outline}, (2017), arXiv: 1710.06999v1.

\bibitem{ste} A. Stefanov, \textit{Strichartz estimates for the magnetic Schr\"odinger equation,} Advances in Mathematics, \textbf{210} (2007), 246--303.

\bibitem{str74} W. Strauss, \textit{Nonlinear scattering theory. Scattering theory in mathematical physics, } In: Lavita J.A., Marchand JP. (eds) Scattering Theory in Mathematical Physics, NATO Advanced Study Institutes Series (Series C--Mathematical and Physical Sciences), vol 9. Springer, Dordrecht, 1974.


\bibitem{Str81} W. Strauss, \textit{Nonlinear scattering theory at low energy: sequel,} J. Funct. Anal. , \textbf{43} (1981), no. 3, 281--293.

\bibitem{Ya99} K. Yajima, \textit{$L^p$-boundedness of wave operators for two dimensional Schr\"{o}dinger operators}, Commun. Math. Phys., \textbf{208} (1999), 125--152.

\bibitem{Zhang2001} Q. Zhang, \textit{Global bounds of Schr\"{o}dinger heat kernels with negative potentials}, Journal of Functional Analysis, \textbf{182} (2001), 344--370.


\end{thebibliography}
\end{document}